\newtheorem{thm}{Theorem}[section]
\newtheorem{cor}[thm]{Corollary}
\newtheorem{lem}[thm]{Lemma}
\newtheorem{prop}[thm]{Proposition}
\theoremstyle{definition}
\newtheorem{defn}[thm]{Definition}
\theoremstyle{remark}
\newtheorem{rem}[thm]{Remark}
\numberwithin{equation}{section}
\begin{document}

\title[Eigenvalue bounds for the $p$-Laplacian]{Conformal upper bounds for the eigenvalues of the $p$-Laplacian\\
}%
\author{Bruno Colbois}%
\address{Bruno Colbois, Universit\'e de Neuch\^atel, Institute de Math\'ematiques, Rue Emile Argand 11, 2000 Neuch\^atel, Switzerland}%
\email{bruno.colbois@unine.ch}%
\author{Luigi Provenzano}%
\address{Luigi Provenzano, Sapienza Universit\`a di Roma, Dipartimento di Scienze di Base e Applicate per l'Ingegneria, Via Antonio Scarpa 16, 00161 Roma, Italy}%
\email{luigi.provenzano@uniroma1.it}%
\thanks{}%
\subjclass[2010]{35P15; 35P30, 58J50}%
\keywords{$p$-Laplacian, Riemannian manifolds, eigenvalues, Neumann boundary conditions, eigenvalue bounds, conformal class}

\date{}%
\begin{abstract}
In this note we present upper bounds for the variational eigenvalues of the $p$-Laplacian on smooth domains of complete $n$-dimensional Riemannian manifolds and Neumann boundary conditions, and on compact (boundaryless) Riemannian manifolds. In particular, we provide upper bounds in the conformal class of a given manifold $(M,g)$ for $1<p\leq n$, and upper bounds for all $p>1$ when we fix a metric $g$. To do so, we use a metric approach for the construction of suitable test functions for the variational characterization of the eigenvalues. The upper bounds agree with the well-known asymptotic estimate of the eigenvalues due to Friedlander. We also present upper bounds for the variational eigenvalues on hypersurfaces bounding smooth domains in a Riemannian manifold in terms of the isoperimetric ratio.
\end{abstract}
\maketitle




\section{Introduction and statement of the main results}

Let $(M,g)$ be a complete, $n$-dimensional smooth Riemannian manifold, $n\geq 2$,  and let $\Omega\subseteq M$ be a bounded domain, i.e., a bounded connected open set, with boundary $\partial\Omega$. Let $p>1$. We consider the Neumann eigenvalue problem for the $p$-Laplace operator:

\begin{equation}\label{pLap}
\begin{cases}
-{\rm div}(|\nabla u|^{p-2}\nabla u)=\mu|u|^{p-2}u\,, & {\rm in\ }\Omega,\\
\frac{\partial u}{\partial\nu}=0\,, & {\rm in\ }\partial\Omega. 
\end{cases}
\end{equation}

Here $\frac{\partial u}{\partial\nu}$ denotes the derivative of $u$ in the direction of the exterior unit normal to the boundary, $\nu$. If $M$ is compact (boundaryless) and $\Omega=M$, then $\partial\Omega=\emptyset$ and we consider the closed problem $-{\rm div}(|\nabla u|^{p-2}\nabla u)=\mu|u|^{p-2}u$ in $M$.

Eigenvalue problems for the $p$-Laplacian with Dirichlet boundary conditions on Euclidean domains have been largely investigated in the last decades. We mention \cite{azorero_alonso} and especially \cite{lindqvist0} for extensive references on the subject. As for Neumann boundary conditions, less information is available. We refer to \cite{an_le} for an exhaustive presentation of eigenvalues problems for the $p$-Laplacian on Euclidean domains subject to various boundary conditions.

It is well-known that problem \eqref{pLap} admits an increasing sequence of non-negative eigenvalues obtained through the {\it Ljusternik-Schnirelman} principle
$$
0=\mu_{1,p}<\mu_{2,p}\leq\cdots\leq\mu_{k,p}\leq\cdots\nearrow+\infty,
$$
which are also called the {\it variational eigenvalues}. It is still an open problem whether other eigenvalues exist (except for the case $p=2$ or $n=1$). Results concerning the first positive eigenvalue and Neumann boundary conditions are available in \cite{elhabib,huang_p} for Euclidean domains. Concerning higher eigenvalues, results in the case of Dirichlet conditions for Euclidean domains can be found in \cite{parini1,parini2,parini3} (continuity and limits with respect to $p$, multiplicity, etc.). As for Riemannian manifolds, we mention \cite{colbois_plap,matei_first,matei_cheegerbuser,valtorta1,valtorta2} for sharp estimates for the first positive eigenvalue in case of compact manifolds or domains and Neumann boundary conditions, under a given lower bound on the Ricci curvature. We also mention \cite{matei_boundedness,matei} for estimates in a conformal class of a given metric for the first positive eigenvalue of a compact manifold. Results for the first eigenvalue of the corresponding Dirichlet problem on domains are available in \cite{mao1,mao2}. 

In this paper we investigate upper bounds for all variational eigenvalues that agree with the Weyl's law under suitable geometrical assumptions on the manifold $M$. In particular, we will investigate upper bounds in the conformal class of a given metric and upper bounds for a fixed metric. 

Results in this spirit are classical and well-known in the case of the Laplacian ($p=2$). For Euclidean domains we recall in particular the classical result of Kr\"oger \cite{kro} which is stated as follows:
\begin{equation}\label{kroger}
\mu_{k,2}\leq C_n\left(\frac{k}{|\Omega|}\right)^{\frac{2}{n}},
\end{equation}
for all $k\in\mathbb N$. Here $C_n=\left(\frac{2+n}{2}\right)^{2/n}4\pi^2\omega_n^{-2/n}$ and $\omega_n$ denotes the volume of the unit ball in $\mathbb R^n$. This result agrees with the Weyl's law for the Neumann Laplacian, namely
\begin{equation}\label{weylLap}
\lim_{k\rightarrow+\infty}\frac{\mu_{k,2}}{k^{\frac{2}{n}}} = 4\pi^2\omega_n^{-\frac{2}{n}}|\Omega|^{-\frac{2}{n}}.
\end{equation}
We note that the constant of the upper bounds \eqref{kroger} does not coincide with the constant in the Weyl's law. The validity of \eqref{kroger} with $C_n=4\pi^2\omega_n^{-2/n}$ is a famous open problem: the Polya's conjecture for the Neumann Laplacian (see \cite{polyaconj,polyatiling}). We also note that for the eigenvalues of the Neumann Laplacian, lower bounds for the eigenvalues of the form \eqref{kroger} do not hold in general, at least for the bottom of the spectrum. In fact it is well-known that one can produce domains with fixed volume and an arbitrary number of eigenvalues close to zero. Thus, for Neumann-type problems, lower bounds necessarily depend on the geometry of the domain in a more involved form (and not merely on the volume), see \cite{chavel,cheeger_lower} for a discussion on Cheeger lower bounds on domains and on compact manifolds, respectively. On the other hand, uniform upper bounds (depending on the dimension and the volume only) are relevant for Neumann-type problems. Upper bounds \eqref{kroger} are obtained in \cite{kro} by classical techniques of harmonic analysis and are easily extended to the Neumann eigenvalues of higher order linear elliptic operators on Euclidean domains, see \cite{Lap1997}.

As for the eigenvalues of the Laplacian on a compact $n$-dimensional Riemannian manifold $(M,g)$ with ${\rm Ric}_g\geq -(n-1)\kappa^2$, $\kappa\geq 0$, uniform upper bounds were proved by Buser \cite{buser}. Namely
\begin{equation}\label{buser}
\mu_{k,2}\leq \frac{(n-1)^2}{4}\kappa^2+C_n'\left(\frac{k}{|M|}\right)^{\frac{2}{n}},
\end{equation}
where $C_n'$ depends only on the dimension. We note that an additive term appears in the estimate. When ${\rm Ric}_g$ is supposed to be non-negative, then we have an analogous estimate as \eqref{kroger}, however the presence of an additive term of the form $\frac{(n-1)^2}{4}\kappa^2$ is necessary, see \cite{brooks0}. Again, these upper bounds agree with the Weyl's law which is given for a compact manifold by \eqref{weylLap} with $\Omega=M$. We also note that the geometry of the manifold enters the bounds as an {\it additive} constant, while the term encoding the asymptotic behavior, as expected, contains only information on the dimension and the volume of the manifold. 

Later, in \cite{colbois_maerten} Colbois and Maerten proved the analogue of \eqref{buser} for the Neumann eigenvalues of the Laplacian on domains $\Omega$ of complete $n$-dimensional Riemannian manifolds with ${\rm Ric}_g\geq -(n-1)\kappa^2$, $\kappa\geq 0$:
\begin{equation}\label{colbois}
\mu_{k,2}\leq A_n\kappa^2+B_n\left(\frac{k}{|\Omega|}\right)^{\frac{2}{n}}.
\end{equation}
Inequalities \eqref{buser} and \eqref{colbois} are valid when a metric $g$ is fixed. 

Another relevant problem is to provide bounds within the conformal class $[g]$ of a given metric $g$ on a compact manifold $M$. The first results in this sense for the eigenvalues of the Laplacian are due to \cite{yang_yau} for $n=2$ and \cite{korevaar} for $n\geq 2$ (see also \cite{EI} for results concerning the first positive eigenvalue). The result of \cite{korevaar} can be summarized as follows: for any compact $n$-dimensional Riemannian manifold $(M,g)$
\begin{equation}\label{korevaar}
\mu_{k,2}|M|^{\frac{2}{n}}\leq C_n([g]) k^{\frac{2}{n}},
\end{equation}
for all $k\in\mathbb N$, where $C_n([g])$ depends only on $n$ and on the conformal class of $g$.

Inequality \eqref{korevaar} has been then improved in \cite{asma_conf}. In particular, in \cite{asma_conf} the following inequality is proved for any compact $n$-dimensional Riemannian manifold $(M,g)$
\begin{equation}\label{asma}
\mu_{k,2}|M|^{\frac{2}{n}}\leq A_n V([g])^{\frac{2}{n}}+B_n k^{\frac{2}{n}},
\end{equation}
for all $k\in\mathbb N$, where $V([g])$ denotes the min-conformal volume (see \eqref{min-conf-vol} below for the definition). Inequalities \eqref{buser} and \eqref{colbois} can be deduced by \eqref{asma} when we fix a metric. In particular, in \cite{asma_conf} an analogous inequality as \eqref{asma} is proved for the Neumann eigenvalues on domains, which implies the bound \eqref{colbois}.

Analogous estimates, all in the same spirit of \cite{buser,colbois_maerten,asma_conf} have been proved for the Steklov problem \cite{colboisgirouard_steklov}, for  hypersurfaces \cite{colboisgirouard}, and for the Neumann eigenvalues of the biharmonic operator on domains of Riemannian manifolds with Ricci curvature bounded from below \cite{colbois_provenzano_2}.

The classical approach to prove upper bounds for the eigenvalues of linear elliptic operators on Euclidean domains, as mentioned, relies on harmonic analysis. However this approach is no more suitable for compact manifolds or, in general, for domains of complete manifolds. The approach which has been used in \cite{colbois_maerten} (which is in the same spirit of \cite{buser}, see also \cite{gny}) makes use of a metric construction. Namely, in order to bound $\mu_{k,2}$ one considers $A_1,...,A_k$ disjoints subsets of $\Omega$ of measure of
the order of $\frac{|\Omega|}{k}$, and introduce test functions $u_1,...,u_k$ subordinated to these sets. A clever estimate of the Rayleigh quotient of these functions provides the upper bounds \eqref{colbois} or \eqref{asma} (depending on the particular construction).

As for the variational eigenvalues of the Neumann $p$-Laplacian with $p\ne 2$ much less is known. Asymptotic estimates are available, which are consequence, as in the case of the Laplacian, of the Weyl's law. Actually, the validity of a Weyl's law for the Neumann (and Dirichlet) variational eigenvalues of the $p$-Laplacian has been conjectured by Friedlander in \cite{friedlander_p}, who proved an asymptotic estimate of the form $C_1\lambda^{n/p}\leq N(\lambda)\leq C_2\lambda^{n/p}$ with $C_1,C_2$ depending only on $n,p$, as $\lambda\rightarrow+\infty$, for the counting function $N(\lambda):=\sharp\{k:\mu_{k,p}\leq\lambda\}$. The conjecture $C_1=C_2$ has been recently proved in \cite{mazurowski} (see also \cite{iannizzotto_squassina} for asymptotic estimates for the eigenvalues of the fractional $p$-Laplacian).

In order to state our main results we need to fix some notation and introduce some definitions. Through the rest of the paper, we will denote by $\mu_{k,p}^g$ either the Neumann eigenvalues of the $p$-Laplacian on a domain $\Omega$ of a complete Riemannian manifold $(M,g)$, or the eigenvalues of the $p$-Laplacian on a compact Riemannian manifold $(M,g)$, unless otherwise specified. For a complete Riemannian manifold $(M,g)$, we shall denote by $|\cdot|_g$ the Riemannian measure associated with the metric $g$. By $[g]$ we denote the conformal class of a given metric $g$.

The first main result of the paper is the following.
\begin{thm}\label{main}
Let $(M,g_0)$ be a complete $n$-dimensional Riemannian manifold, $n\geq 2$, with Ricci curvature bounded below ${\rm Ric}_{g_0}\geq -(n-1)\kappa^2$, $\kappa\geq 0$ and let $1<p\leq n$. Then, for all bounded domains $\Omega\subset M$ with smooth boundary and all metric $g\in[g_0]$ there exist constants $A_{p,n}, B_{p,n}>0$ only depending on $p$ and $n$ such that
\begin{equation}\label{main_ineq}
\mu_{k,p}^g|\Omega|_g^{\frac{p}{n}}\leq A_{p,n}\kappa^p |\Omega|_{g_0}^{\frac{p}{n}}+B_{p,n}k^{\frac{p}{n}},
\end{equation}
for all $k\in\mathbb N$.
\end{thm}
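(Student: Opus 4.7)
Our plan is to adapt the metric construction of disjointly supported test functions from \cite{gny, asma_conf, colbois_maerten} to the nonlinear variational eigenvalues of the $p$-Laplacian. The argument proceeds in four steps.

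\emph{Step 1 (Variational reduction).} First, I would show that to bound $\mu_{k,p}^g$ from above it suffices to exhibit $k$ disjointly supported functions $u_1,\dots,u_k\in W^{1,p}(\Omega)$ with small Rayleigh quotients. Indeed, for any linear combination $u=\sum_i c_i u_i$ both the $L^p$-norm and the $p$-Dirichlet energy decouple into sums weighted by $|c_i|^p$, so the Rayleigh quotient of $u$ is a convex combination of the quotients $R_i(u_i)$ and in particular bounded by $\max_i R_i(u_i)$. The map $(c_1,\dots,c_k)\mapsto\sum_i c_i u_i$ embeds $S^{k-1}$ oddly into the unit $L^p(dv_g)$-sphere in $\mathrm{span}\{u_i\}$, producing a symmetric set of genus $k$ on which the Rayleigh quotient is bounded; the Ljusternik--Schnirelman characterization of $\mu_{k,p}^g$ then yields $\mu_{k,p}^g\leq\max_i R_i(u_i)$.

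\emph{Step 2 (Two-scale metric decomposition).} Next I would apply a covering/capacitor lemma in the spirit of \cite{gny, asma_conf} to the measured metric space $(\Omega,d_{g_0},dv_g)$. The lower Ricci bound on $g_0$ yields a Bishop--Gromov doubling estimate up to scale $1/\kappa$, which produces disjoint \emph{annular capacitors} $(A_i,A_i^*)$ with $A_i\subset A_i^*\subset\Omega$ and $d_{g_0}(A_i,\Omega\setminus A_i^*)\geq r_i$, each satisfying $|A_i|_g\geq c_n|\Omega|_g/k$ together with the doubling inequality $|A_i^*|_g\leq C_n|A_i|_g$. The outputs fall into two regimes: \emph{small-scale} capacitors with $|A_i^*|_{g_0}\leq C_n r_i^n$, and \emph{large-scale} capacitors with $r_i\geq c_n/\kappa$ and---after extracting a sub-family of size $k$ by pigeonhole---$|A_i^*|_{g_0}\leq C_n|\Omega|_{g_0}/k$.

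\emph{Step 3 (Rayleigh quotient via conformal change).} For each capacitor I would take the standard Lipschitz plateau function $u_i\equiv 1$ on $A_i$, $u_i\equiv 0$ outside $A_i^*$, with $|\nabla u_i|_{g_0}\leq 1/r_i$. Writing $g=\phi^2 g_0$, so $dv_g=\phi^n\,dv_{g_0}$ and $|\nabla u_i|_g^p=\phi^{-p}|\nabla u_i|_{g_0}^p$, the numerator of the Rayleigh quotient becomes $\int_{A_i^*}\phi^{n-p}|\nabla u_i|_{g_0}^p\,dv_{g_0}$. The assumption $p\leq n$ is crucial here: for $p=n$ this integrand is already conformally invariant, while for $1<p<n$ H\"older's inequality with conjugate exponents $n/(n-p)$ and $n/p$ factors out $\phi^n$ and yields
$$\int|\nabla u_i|_g^p\,dv_g\leq|A_i^*|_g^{(n-p)/n}\left(\int|\nabla u_i|_{g_0}^n\,dv_{g_0}\right)^{p/n}\leq|A_i^*|_g^{(n-p)/n}\,r_i^{-p}\,|A_i^*|_{g_0}^{p/n}.$$
Combining with $\int|u_i|^p\,dv_g\geq|A_i|_g$ and the doubling estimate, this gives $R_i(u_i)\leq C_{p,n}(k/|\Omega|_g)^{p/n}\,r_i^{-p}\,|A_i^*|_{g_0}^{p/n}$.

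\emph{Step 4 (Conclusion and main obstacle).} For small-scale capacitors the factor $r_i^{-p}|A_i^*|_{g_0}^{p/n}$ is uniformly bounded, giving $R_i\leq B_{p,n}(k/|\Omega|_g)^{p/n}$. For large-scale capacitors $r_i^{-p}\leq C_n\kappa^p$ and the bound $|A_i^*|_{g_0}\leq C_n|\Omega|_{g_0}/k$ absorbs the $k$-factor, yielding $R_i\leq A_{p,n}\kappa^p|\Omega|_{g_0}^{p/n}/|\Omega|_g^{p/n}$. Taking the max over $i$ and multiplying through by $|\Omega|_g^{p/n}$ gives the desired bound \eqref{main_ineq}. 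The main obstacle is the two-scale decomposition of Step 2: the capacitors must simultaneously carry a controlled $g$-measure (for the denominator of $R_i$) and have small $g_0$-radii with controlled $g_0$-volumes (for the numerator), and this interaction between the two metrics---leveraging the doubling property of $(M,g_0)$---is the technical heart of the proof, precisely where the assumption $p\leq n$ and the Ricci bound enter.
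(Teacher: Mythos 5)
Your proposal follows essentially the same route as the paper: reduce $\mu_{k,p}^g$ to a max of Rayleigh quotients via a genus argument with disjointly supported test functions (the paper's Lemma \ref{disjoint_lem}), build a family of disjoint capacitors from a metric decomposition on $(\Omega,d_{g_0},\varsigma_g)$ (Theorem \ref{corollary_small_annuli}), take plateau functions, and then exploit $p\leq n$ via H\"older and the conformal invariance of $\int|\nabla_{g_0}u|^n\,dv_{g_0}$ together with Bishop--Gromov at scales below $1/\kappa$. The two cases ``small-scale'' and ``large-scale'' match the paper's cases $iii)$-$a)$ (annuli of outer radius $<1/\kappa$) and $iii)$-$b)$ (unions of balls of radius $\sim 1/\kappa$), respectively, and the pigeonhole extraction to control the $g$- and $g_0$-measures of the supports is the same device (the paper starts from $3k$ sets to allow a double pigeonhole).

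One technical point in your Step 3 needs repair. You bound $\left(\int|\nabla u_i|_{g_0}^n\,dv_{g_0}\right)^{p/n}\leq r_i^{-p}|A_i^*|_{g_0}^{p/n}$ and then require the ``small-scale'' property $|A_i^*|_{g_0}\leq C_n r_i^n$ to conclude uniform boundedness. For a proper annulus $A(p,r_i,R_i)$, the set $A_i^*=2A_i$ has $g_0$-volume of order $R_i^n$, while the inner radius $r_i$ can be much smaller than $R_i$; the decomposition theorem gives no control on the ratio $R_i/r_i$, so $|A_i^*|_{g_0}\leq C_n r_i^n$ need not hold, and using the crude global gradient bound $|\nabla u_i|_{g_0}\lesssim 1/r_i$ over all of $A_i^*$ produces a factor $(R_i/r_i)^n$ that can blow up. The correct argument, as in the paper's estimates \eqref{cap_ball} and \eqref{cap_ann}, splits the energy integral over the two transition shells of the plateau function: on the inner shell $|\nabla u_i|\lesssim 1/r_i$ while the volume is $\lesssim r_i^n$, and on the outer shell $|\nabla u_i|\lesssim 1/R_i$ while the volume is $\lesssim R_i^n$, each contribution being bounded by $C_n e^{C_n}$ thanks to Bishop--Gromov with $R_i<1/\kappa$. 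This yields the uniform bound $\int|\nabla u_i|_{g_0}^n\,dv_{g_0}\leq C_n$ unconditionally, without the extra hypothesis on $|A_i^*|_{g_0}$ that you impose. Once this is replaced, the rest of your computation carries through and agrees with the paper's.
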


Theorem \ref{main} can be stated also for compact manifolds. To do so, we need the following definition (see also \cite{asma_conf})
\begin{defn}\label{min-conf-vol}
Let $(M,g)$ be a compact $n$-dimensional Riemannian manifold. We define the min-conformal volume $V([g])$ by
$$
V([g]):=\inf\left\{|M|_{g_0}: g_0\in[g], {\rm Ric}_{g_0}\geq-(n-1)\right\}
$$
\end{defn}
It is a standard to verify that, if we denote by $\kappa(g)$ the smallest non-negative number such that ${\rm Ric}_{g}\geq -(n-1)\kappa(g)^2$, then
$$
V([g]):=\inf\left\{|M|_{g_0}\kappa(g_0)^{n}: g_0\in[g]\right\}
$$
We have the following theorem.

\begin{thm}\label{cor}
For all $1<p\leq n$, $n\geq 2$, there exist constants $A_{p,n}, B_{p,n}>0$ only depending on $p$ and $n$, such that for all compact $n$-dimensional Riemannian manifolds and all $k\in\mathbb N$ we have
\begin{equation}\label{main_ineq_cor}
\mu_{k,p}^g|M|_g^{\frac{p}{n}}\leq A_{p,n}V([g])^{\frac{p}{n}}+B_{p,n}k^{\frac{p}{n}}.
\end{equation}
\end{thm}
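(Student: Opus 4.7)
The plan is to deduce Theorem \ref{cor} from Theorem \ref{main} applied to the manifold itself (viewed as a ``domain'' in the closed sense $\Omega = M$, where $M$ is compact without boundary) by optimizing over the conformal class. The inequality \eqref{main_ineq_cor} is scale-invariant in $g$ on the left-hand side, and the right-hand side involves $V([g])$ which is the infimum of a conformally-controlled quantity; this structure suggests that we should first fix an arbitrary reference metric $g_0 \in [g]$ with Ricci lower bound $-(n-1)\kappa(g_0)^2$, apply \eqref{main_ineq}, and then take the infimum over all admissible $g_0$.

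Concretely, I would first observe that for $M$ compact without boundary, the proof of Theorem \ref{main} applies with $\Omega = M$: the construction of a family of disjoint ``capacitors'' and the associated test functions for the Ljusternik-Schnirelman variational characterization of $\mu_{k,p}^g$ is a purely metric construction that does not use the presence of a boundary (indeed the closed case is slightly easier, as no boundary term enters). So for any $g_0 \in [g]$ we obtain
\begin{equation*}
\mu_{k,p}^g \, |M|_g^{p/n} \le A_{p,n} \, \kappa(g_0)^p \, |M|_{g_0}^{p/n} + B_{p,n} \, k^{p/n}.
\end{equation*}
Next, rewriting the first term on the right as
\begin{equation*}
\kappa(g_0)^p \, |M|_{g_0}^{p/n} = \bigl(\kappa(g_0)^n \, |M|_{g_0}\bigr)^{p/n},
\end{equation*}
and taking the infimum over all $g_0 \in [g]$, the second characterization of $V([g])$ recorded just after Definition \ref{min-conf-vol} gives exactly
\begin{equation*}
\inf_{g_0 \in [g]} \kappa(g_0)^p \, |M|_{g_0}^{p/n} = V([g])^{p/n},
\end{equation*}
and \eqref{main_ineq_cor} follows with the same constants as in Theorem \ref{main}. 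Note that the left-hand side $\mu_{k,p}^g |M|_g^{p/n}$ does not depend on the choice of $g_0$, so the optimization is legitimate.

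The only genuine obstacle is verifying that the hypotheses of Theorem \ref{main} --- stated for \emph{domains} of a complete manifold --- really cover the compact boundaryless case $\Omega = M$. This is essentially a matter of inspecting the proof of Theorem \ref{main}: the metric capacitor construction used to produce test functions $u_1,\dots,u_k$ subordinated to disjoint sets $A_1,\dots,A_k$ of measure $\sim |M|_g/k$ requires only the volume doubling and Poincaré-type inequalities coming from the Ricci lower bound ${\rm Ric}_{g_0} \ge -(n-1)\kappa(g_0)^2$, together with the conformal invariance of $\int |\nabla u|^n \, dv_g$ when $1 < p \le n$ that is used to pass from $g_0$ to $g$; none of this step requires $\Omega$ to have a boundary. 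Once this is noted, Theorem \ref{cor} follows immediately by the infimum argument above.
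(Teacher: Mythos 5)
Your argument is correct and is essentially the paper's own proof (recorded in Remark \ref{remark_manifold}): one applies Theorem \ref{main} with $\Omega = M$, observing that the metric capacitor construction and the conformal invariance of the $n$-Dirichlet energy are insensitive to the absence of a boundary, and then takes the infimum over $g_0 \in [g]$ using the second characterization $V([g]) = \inf\{|M|_{g_0}\kappa(g_0)^n : g_0 \in [g]\}$ together with the monotonicity of $x \mapsto x^{p/n}$.
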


The proof of Theorem \ref{cor} follows easily from that of Theorem \ref{main} (see Remark \ref{remark_manifold}).

We note that inequalities of the type \eqref{main_ineq}-\eqref{main_ineq_cor} are not possible for $p>n$. In fact, it is proved in \cite{matei} that if $p>n$, for any $n$-dimensional Riemannian manifold $(M,g_0)$ there exists a metric in $g\in [g_0]$ and of volume one with $\mu_{2,p}^g$ arbitrarily large.

However, if we fix the metric, we have upper bounds for any $p>1$. The second main result of this paper is the following.

\begin{thm}\label{main_2}
Let $(M,g)$ be a complete $n$-dimensional Riemannian manifold with Ricci curvature bounded below ${\rm Ric}_g\geq -(n-1)\kappa^2$, $\kappa\geq 0$, and let $p>1$. Then, for all bounded domains $\Omega\subset M$ with smooth boundary there exist constants $A_{p,n}, B_{p,n}>0$ only depending on $p$ and $n$ such that
\begin{equation}\label{main_ineq_2}
\mu_{k,p}^g\leq A_{p,n}\kappa^p+B_{p,n}\left(\frac{k}{|\Omega|_g}\right)^{\frac{p}{n}},
\end{equation}
for all $k\in\mathbb N$.
\end{thm}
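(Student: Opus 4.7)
The plan is to follow the metric approach of Buser and Colbois-Maerten for the Laplacian, suitably adapted to the $p$-Laplacian. The starting point is the variational (Ljusternik-Schnirelman) characterization: if $u_0, u_1, \ldots, u_k \in W^{1,p}(\Omega)$ are $k+1$ nontrivial functions with pairwise disjoint supports, then the odd continuous map $S^k \ni x \mapsto \sum_{i=0}^k x_i u_i$ yields an admissible family of Krasnoselskii genus $k+1$. Because the supports are disjoint, both $\int |\nabla(\sum x_i u_i)|^p\,dv_g$ and $\int |\sum x_i u_i|^p\,dv_g$ split as $\sum |x_i|^p(\cdot)$, so the Rayleigh quotient along this family is bounded by $\max_i \big(\int |\nabla u_i|^p/\int |u_i|^p\big)$, giving
\[
\mu_{k+1,p}^g \;\leq\; \max_{0\leq i \leq k}\,\frac{\int_\Omega |\nabla u_i|^p \, dv_g}{\int_\Omega |u_i|^p \, dv_g}.
\]
It therefore suffices to construct $k+1$ disjoint-support test functions whose Rayleigh quotients are all at most $A_{p,n}\kappa^p + B_{p,n}(k/|\Omega|_g)^{p/n}$.

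The test functions arise from a metric capacitor construction. Since $\mathrm{Ric}_g \geq -(n-1)\kappa^2$, Bishop-Gromov volume comparison gives doubling of geodesic balls at scales $r\lesssim 1/\kappa$, and controlled (exponential) growth beyond. The covering/capacitor lemma used in \cite{colbois_maerten} (in the spirit of Gromov and of Grigor'yan-Netrusov-Yau) then produces, for each $k$, a family of pairwise disjoint pairs $(A_i,\hat A_i)$, $A_i\subset \hat A_i \subset \Omega$, such that $(\mathrm{i})$ $|A_i|_g \geq c_n |\Omega|_g/k$, $(\mathrm{ii})$ $|\hat A_i|_g \leq C_n |A_i|_g$, and $(\mathrm{iii})$ $\mathrm{dist}_g(A_i,\Omega\setminus\hat A_i)\geq r_i$ with
\[
r_i^{-1} \;\leq\; C_n\,\bigl(\kappa + (k/|\Omega|_g)^{1/n}\bigr).
\]
Set $u_i(x):=\max\{0,\,1-r_i^{-1}\mathrm{dist}_g(x,A_i)\}$, so that $u_i\equiv 1$ on $A_i$, $u_i\equiv 0$ outside $\hat A_i$, and $|\nabla u_i|\leq r_i^{-1}$ almost everywhere; by $(\mathrm{iii})$ the supports are disjoint. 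Using $(\mathrm{i})$, $(\mathrm{ii})$ and the elementary bound $(a+b)^p\leq 2^{p-1}(a^p+b^p)$,
\[
\frac{\int_\Omega |\nabla u_i|^p\,dv_g}{\int_\Omega |u_i|^p\,dv_g} \;\leq\; \frac{r_i^{-p}\,|\hat A_i|_g}{|A_i|_g} \;\leq\; A_{p,n}\kappa^p + B_{p,n}\,(k/|\Omega|_g)^{p/n},
\]
which, combined with the first paragraph, yields the claim (after relabelling $k+1\to k$).

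The main obstacle is the metric step: producing the capacitors with radii that exhibit the correct dichotomy, namely $r_i \sim (|\Omega|_g/k)^{1/n}$ in the Euclidean-like regime $(|\Omega|_g/k)^{1/n}\leq 1/\kappa$, and $r_i \sim 1/\kappa$ in the curvature-dominated regime. It is precisely the $\kappa r$-dependence of the Bishop-Gromov doubling constant, together with this splitting in the choice of the $r_i$, that forces the additive $\kappa^p$ term and distinguishes the Riemannian case from the Euclidean one. By contrast, the variational set-up, the Lipschitz cut-offs, and the Rayleigh estimate all proceed identically for every $p>1$ with constants depending only on $p$ and $n$; no conformal invariance is used, which explains why the restriction $p\leq n$ present in Theorem~\ref{main} disappears here.
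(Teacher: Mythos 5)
Your proposal is correct and follows essentially the same strategy as the paper: a Krasnoselskii-genus test family built from disjointly-supported Lipschitz cut-offs subordinate to a metric capacitor decomposition, with the two-regime bound on the cut-off scale ($r_i\sim(|\Omega|_g/k)^{1/n}$ when that quantity is $\lesssim 1/\kappa$, and $r_i\sim 1/\kappa$ otherwise) producing respectively the Weyl-order term and the additive $\kappa^p$ term.

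One remark on presentation versus substance: you state the decomposition as a single black-box claim with properties (i)--(iii), while the paper obtains it by combining Theorem~\ref{corollary_small_annuli} (the merger of the Grigor'yan--Netrusov--Yau annulus decomposition with the Colbois--Maerten ball decomposition, applied at scale $a=1/\kappa$) with Corollary~\ref{corollarygny0} (which supplies the lower bound on the annulus radii via Bishop--Gromov), and then treats the ``annulus'' case (Step 1, large $k$) and the ``union of balls'' case (Step 2, small $k$) separately; your unified claim is indeed what those two steps deliver, but it is not literally a single lemma from \cite{colbois_maerten}. Your use of $p$ enters only through the genus argument (homeomorphism of your test set to a sphere, which is what Lemma~\ref{disjoint_lem} encodes), the $L^p$-Rayleigh quotient, and the elementary inequality $(a+b)^p\le 2^{p-1}(a^p+b^p)$, and you correctly identify that none of this needs conformal invariance, which is precisely why $p\le n$ is not required here in contrast to Theorem~\ref{main}.
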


As in the case of Theorem \ref{main}, Theorem \ref{main_2} can be stated also for compact manifolds (see Remark \ref{remark_manifold_2}).

\begin{thm}\label{cor_2}
For all $p>1$, $n\geq 2$, there exist constants $A_{p,n}, B_{p,n}>0$ only depending on the $p$ and $n$, such that for all compact $n$-dimensional Riemannian manifolds  with Ricci curvature bounded below ${\rm Ric}_g\geq -(n-1)\kappa^2$, $\kappa\geq 0$ we have
\begin{equation}\label{main_ineq_cor_2}
\mu_{k,p}^g\leq A_{p,n}\kappa^p+B_{p,n}\left(\frac{k}{|M|_g}\right)^{\frac{p}{n}},
\end{equation}
for all $k\in\mathbb N$.
\end{thm}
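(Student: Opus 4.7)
The plan is to observe that a compact boundaryless manifold $(M,g)$ fits directly into the framework already developed to prove Theorem \ref{main_2}, taking $M$ itself in place of the domain $\Omega$. The closed eigenvalue problem on $M$ has exactly the same Ljusternik--Schnirelman variational structure as the Neumann problem on a domain: $\mu_{k,p}^g$ is characterized as a min-max of the Rayleigh quotient $\int_M|\nabla u|^p/\int_M|u|^p$ over symmetric subsets of the $L^p$ unit sphere in $W^{1,p}(M)$ of Krasnoselskii genus at least $k$, and in both settings the first eigenvalue is $0$ with constant eigenfunctions. So the only question is whether the test-function construction used for Theorem \ref{main_2} survives the passage to a manifold without boundary.

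Following the metric-construction strategy recalled in the introduction (in the spirit of \cite{buser,colbois_maerten,gny}) and used to prove Theorem \ref{main_2}, I would build $k$ pairwise disjoint measurable subsets $A_1,\ldots,A_k\subset M$ of measure of order $|M|_g/k$, together with test functions $u_1,\ldots,u_k\in W^{1,p}(M)$ supported in pairwise disjoint enlargements of the $A_i$, each $u_i$ being a standard cutoff associated with a metric ball of suitably chosen radius. The Ricci lower bound ${\rm Ric}_g\geq-(n-1)\kappa^2$ enters through Bishop--Gromov volume comparison, which provides both the packing/covering argument producing the $A_i$ and a uniform bound of the form $A_{p,n}\kappa^p+B_{p,n}(k/|M|_g)^{p/n}$ on the Rayleigh quotient of each individual $u_i$.

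To close the argument, I would exploit disjointness of supports: for $u=\sum_{i=1}^k a_i u_i$ one has $\int_M|u|^p=\sum_i|a_i|^p\int_M|u_i|^p$ and $\int_M|\nabla u|^p=\sum_i|a_i|^p\int_M|\nabla u_i|^p$, so the Rayleigh quotient on the span of the $u_i$ is dominated by $\max_i R_p(u_i)$. The image of the unit $\ell^p$ sphere in $\Real^k$ under this linear map is a symmetric subset of $W^{1,p}(M)$ of genus $k$, which inserted in the Ljusternik--Schnirelman characterization yields \eqref{main_ineq_cor_2}. If needed, the orthogonality to constants (encoded in the $\inf$--$\sup$ characterization) can be arranged by working with $k+1$ disjoint supports and imposing the single linear constraint of zero mean, losing only a constant in the final bound.

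The main point to verify, and really the only difference with Theorem \ref{main_2}, is that the metric construction uses only the underlying complete metric measure space $(M,g)$ together with its Ricci lower bound, and never relies on boundary. Since compact manifolds are automatically complete, there is no genuine obstacle and the proof of Theorem \ref{main_2} transfers essentially verbatim with $\Omega$ replaced by $M$. For this reason I would present the result as a direct consequence of the proof of Theorem \ref{main_2}, as announced by Remark \ref{remark_manifold_2}, rather than reproducing the construction in full.
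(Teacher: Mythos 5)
Your proposal is correct and matches the paper's own argument: Theorem \ref{cor_2} is deduced by taking $\Omega=M$ in the proof of Theorem \ref{main_2}, exactly as announced in Remark \ref{remark_manifold_2}, since the metric decomposition, Bishop--Gromov comparison, and Rayleigh-quotient estimates use only completeness and the Ricci lower bound and never invoke the boundary. Your parenthetical worry about orthogonality to constants is unnecessary: the Krasnoselskii-genus min-max \eqref{minmax} does not impose a zero-mean constraint, and Lemma \ref{disjoint_lem} already shows that the span of $k$ disjointly supported functions has genus $k$, so no extra set or linear constraint is required.
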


Theorem \ref{cor} carries a number of corollaries. In particular, as in \cite{colboisconformal} it is possible to introduce the {\it $p$-conformal variational eigenvalues} which are defined by
\begin{equation}
\tilde\mu_{k,p}^g:=\sup\left\{\mu_{k,p}^{g'}|M|_{g'}^{\frac{p}{n}}:g'\in[g]\right\}.
\end{equation}
Theorem \ref{cor} implies the following bounds on the $p$-conformal variational eigenvalues.

\begin{cor}\label{cor1}
For all $1<p\leq n$, $n\geq 2$, there exist constants $A_{p,n}, B_{p,n}>0$ only depending on $p$ and $n$, such that for all compact $n$-dimensional Riemannian manifolds and all $k\in\mathbb N$ we have
\begin{equation}\label{main_ineq_cor_pconf}
\tilde\mu_{k,p}^g\leq A_{p,n}V([g])^{\frac{p}{n}}+B_{p,n}k^{\frac{p}{n}}.
\end{equation}
\end{cor}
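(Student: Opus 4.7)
The plan is to deduce Corollary \ref{cor1} as an almost immediate consequence of Theorem \ref{cor}, using the fact that the min-conformal volume is a conformal invariant. The key observation is that the right-hand side of the bound in Theorem \ref{cor} depends on $g$ only through the conformal class $[g]$, while the left-hand side is the quantity whose supremum over $[g]$ defines $\tilde\mu_{k,p}^g$.

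First I would fix a compact $n$-dimensional Riemannian manifold $(M,g)$ with $1<p\leq n$, and take an arbitrary metric $g'\in[g]$. Applying Theorem \ref{cor} to the Riemannian manifold $(M,g')$ yields
\begin{equation*}
\mu_{k,p}^{g'}|M|_{g'}^{\frac{p}{n}}\leq A_{p,n}V([g'])^{\frac{p}{n}}+B_{p,n}k^{\frac{p}{n}},
\end{equation*}
with the same constants $A_{p,n},B_{p,n}$ provided by Theorem \ref{cor}.

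Next I would observe that the min-conformal volume is a conformal invariant: directly from Definition \ref{min-conf-vol}, $V([g'])$ is defined by an infimum over all metrics in the conformal class of $g'$. Since $g'\in[g]$ implies $[g']=[g]$, we have $V([g'])=V([g])$. Substituting this equality into the inequality above, and then taking the supremum over all $g'\in[g]$, gives
\begin{equation*}
\tilde\mu_{k,p}^g=\sup_{g'\in[g]}\mu_{k,p}^{g'}|M|_{g'}^{\frac{p}{n}}\leq A_{p,n}V([g])^{\frac{p}{n}}+B_{p,n}k^{\frac{p}{n}},
\end{equation*}
which is the desired bound \eqref{main_ineq_cor_pconf}.

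Since Theorem \ref{cor} already encodes the hard analytic work, there is no real obstacle here; the only non-trivial step is the observation that $V$ only sees the conformal class, which is immediate from the definition. The restriction $1<p\leq n$ is inherited from Theorem \ref{cor} and is indispensable, since the remark following Theorem \ref{cor} recalls that for $p>n$ one can make $\mu_{2,p}^g$ arbitrarily large within a fixed conformal class, so that $\tilde\mu_{k,p}^g$ is not even finite in general.
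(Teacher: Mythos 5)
Your proof is correct and takes exactly the route the paper intends: apply Theorem~\ref{cor} to each $g'\in[g]$, use that $V([g'])=V([g])$ since the min-conformal volume depends only on the conformal class, and take the supremum over $g'$. Your closing remark on the necessity of $1<p\leq n$ matches the paper's discussion following Theorem~\ref{cor}.
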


In the case of surfaces (i.e., $n=2$) we know that any orientable Riemannian $2$-manifold of genus $\gamma$ is conformally equivalent to the standard sphere of constant curvature $1$ if $\gamma=0$, to the flat torus if $\gamma=1$, and to a manifold with constant curvature $-1$ if $\gamma\geq 2$ (Uniformization Theorem). In the first two cases $V([g])=0$, while in the third case $V([g])\leq 4\pi(\gamma-1)$. From this and Theorem \ref{cor} we deduce the following corollary.

\begin{cor}\label{cor2}
For all $1<p\leq 2$, there exist constants $A_{p}, B_{p}>0$ only depending on $p$, such that for all compact orientable Riemannian surfaces $(\Sigma_{\gamma},g)$ of genus $\gamma$ and all $k\in\mathbb N$ we have
\begin{equation}\label{main_ineq_cor_surf}
\mu_{k,p}^g|\Sigma_{\gamma}|_g^{\frac{p}{2}}\leq A_{p}(\max\{0,\gamma-1\})^{\frac{p}{2}}+B_{p}k^{\frac{p}{2}}.
\end{equation}
\end{cor}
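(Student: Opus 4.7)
The plan is to derive Corollary \ref{cor2} directly from Theorem \ref{cor} applied with $n=2$, reducing everything to an estimate of the min-conformal volume $V([g])$ in each of the three conformal classes singled out by the Uniformization Theorem. Concretely, Theorem \ref{cor} gives, for any compact orientable surface $(\Sigma_\gamma,g)$,
$$
\mu_{k,p}^g|\Sigma_\gamma|_g^{p/2}\leq A_{p,2}V([g])^{p/2}+B_{p,2}\,k^{p/2},
$$
so the whole task is to bound $V([g])$ in terms of $\gamma$ alone.

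I would then invoke the Uniformization Theorem to pick a canonical representative $g_0\in[g]$ of constant curvature. If $\gamma=0$, a representative is the round sphere metric, whose Ricci curvature is (a positive multiple of) the metric; after homothety this metric can be rescaled to have arbitrarily small volume while keeping $\mathrm{Ric}_{g_0}\geq -1$, so $V([g])=0$. If $\gamma=1$, the flat torus metric is Ricci-flat and can likewise be rescaled to volume zero, so again $V([g])=0$. In both cases $\max\{0,\gamma-1\}=0$ matches $V([g])=0$.

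For $\gamma\geq 2$, I would take $g_0\in[g]$ to be the hyperbolic metric of constant sectional (hence Ricci) curvature $-1$; this satisfies $\mathrm{Ric}_{g_0}\geq -(n-1)=-1$ with equality, so it is admissible in Definition~\ref{min-conf-vol}. Its area is determined by Gauss--Bonnet: since $\chi(\Sigma_\gamma)=2-2\gamma$ and the curvature is $-1$,
$$
|\Sigma_\gamma|_{g_0}=-2\pi\chi(\Sigma_\gamma)=4\pi(\gamma-1).
$$
Hence $V([g])\leq 4\pi(\gamma-1)=4\pi\max\{0,\gamma-1\}$.

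Substituting these bounds into the inequality from Theorem \ref{cor}, and absorbing the factor $(4\pi)^{p/2}$ into the constant $A_{p,2}$, gives \eqref{main_ineq_cor_surf} with $A_p:=(4\pi)^{p/2}A_{p,2}$ and $B_p:=B_{p,2}$. There is no real obstacle: the only nontrivial ingredient beyond Theorem \ref{cor} is the Gauss--Bonnet computation, and the only thing to double-check is that the rescaling argument in genus $0$ and $1$ is compatible with the infimum defining $V([g])$, i.e.\ that homotheties preserve the conformal class and can be used to drive the volume to zero while maintaining the Ricci lower bound.
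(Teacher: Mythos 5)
Your proof is correct and follows exactly the route the paper takes: apply Theorem \ref{cor} with $n=2$, then bound $V([g])$ using the Uniformization Theorem (homothety of the round sphere or flat torus to drive volume to zero in genus $0,1$, and Gauss--Bonnet for the hyperbolic metric in genus $\geq 2$). The paper states these facts about $V([g])$ without further justification, so your Gauss--Bonnet computation and rescaling check simply make explicit what the paper leaves implicit.
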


Finally, we want to remark that Theorem \ref{main_2} holds in particular for all Euclidean domains.

\begin{cor}\label{cor3}
For all bounded domains $\Omega$ in $\mathbb R^n$, $n\geq 2$, with smooth boundary and all $p>1$ there exist constants $A_{p,n}, B_{p,n}>0$ only depending on $p$ and $n$ such that
\begin{equation}\label{main_ineq_2_euclidean}
\mu_{k,p}\leq B_{p,n}\left(\frac{k}{|\Omega|}\right)^{\frac{p}{n}},
\end{equation}
for all $k\in\mathbb N$, where $\mu_{k,p}$ denotes the $k$-th variational eigenvalue of the Neumann $p$-Laplacian on $\Omega$ and $|\Omega|$ denotes the Lebesgue measure of $\Omega$.
\end{cor}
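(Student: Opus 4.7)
The plan is that Corollary \ref{cor3} is an immediate specialization of Theorem \ref{main_2}, so essentially no new work is required. I would simply apply Theorem \ref{main_2} to the Riemannian manifold $(\mathbb{R}^n, g_{\rm eucl})$ and restrict to bounded smooth domains $\Omega \subset \mathbb{R}^n$.

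The key observation is that $(\mathbb{R}^n, g_{\rm eucl})$ is complete and has ${\rm Ric}_{g_{\rm eucl}} \equiv 0$, so we may take $\kappa = 0$ in the hypothesis ${\rm Ric}_g \geq -(n-1)\kappa^2$. Theorem \ref{main_2} then gives, for every $p>1$ and every bounded smooth domain $\Omega \subset \mathbb{R}^n$,
\begin{equation*}
\mu_{k,p} \leq A_{p,n}\cdot 0^p + B_{p,n}\left(\frac{k}{|\Omega|}\right)^{\frac{p}{n}} = B_{p,n}\left(\frac{k}{|\Omega|}\right)^{\frac{p}{n}},
\end{equation*}
with the same constants $B_{p,n}$ as those produced in Theorem \ref{main_2}. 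The Riemannian volume $|\Omega|_g$ for the Euclidean metric is the Lebesgue measure $|\Omega|$, matching the statement. The constant $A_{p,n}$ is retained in the formal statement only because it appears in the parent theorem; it plays no role here since its coefficient vanishes.

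There is no genuine obstacle. The only conceivable subtlety is making sure that the hypotheses of Theorem \ref{main_2} are indeed satisfied by $\mathbb{R}^n$ (completeness, smoothness of $\partial\Omega$ assumed, and a uniform lower Ricci bound), all of which are immediate. Hence the proof reduces to invoking Theorem \ref{main_2} with $\kappa = 0$.
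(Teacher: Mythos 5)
Your proposal is correct and matches the paper's intent exactly: the paper introduces Corollary \ref{cor3} with the remark that Theorem \ref{main_2} holds in particular for Euclidean domains, and the corollary follows by taking $(M,g)=(\mathbb R^n,g_{\rm eucl})$, which is complete with ${\rm Ric}\equiv 0$, so one sets $\kappa=0$ and the additive term vanishes. Nothing further is needed.
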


We note that in the Euclidean case it is not possible to prove \eqref{main_ineq_2_euclidean} as in \cite{kro,Lap1997} (except for the case $p=2$), where the Hilbert structure of $L^2(\Omega)$ is deeply involved in the proof.



The proofs of Theorems \ref{main} and \ref{main_2} are given in the same spirit of \cite{colbois_maerten, colbois_provenzano_2,asma_conf}. In fact, in this paper we formalize a general metric approach suitable to prove upper bounds for variational eigenvalues which has been refined more and more starting from \cite{gny}, passing through \cite{colbois_maerten,asma_conf} and ending with \cite{colbois_provenzano_2} and with the present note. 

Not only we formalize this metric approach stating a general decomposition result (Theorem \ref{corollary_small_annuli}): we also show how it can be exploited in two different ways, whose differences may seem slight but are substantial. A first way  allows to provide conformal upper bounds for $1<p\leq n$ (see the proof of Theorem \ref{main}), while an alternative way allows to prove upper bounds for all $p>1$ when we fix a metric $g$ (see the proof of Theorem \ref{main_2}). We note that the main decomposition result, Theorem \ref{corollary_small_annuli}, is a clever merging of a classical decomposition result of a metric measure space by capacitors (Theorem \ref{gny}, see also \cite[Theorem 1.1]{gny}) and of a refinement of a method introduced in \cite{colbois_maerten} (Lemma \ref{genCM-cor}). Actually, one can see that the proof of Theorem \ref{main_2} can be obtained just by exploiting Lemma \ref{genCM-cor}, i.e., morally it can be obtained with (a refinement of) the technique introduced in \cite{colbois_maerten}. Therefore, we deduce a sort of ``metatheorem'' stating that, if we are able to prove a result for the eigenvalues of the Laplacian with a method in the spirit of \cite{colbois_maerten}, the result is likely to hold also for the variational eigenvalues of the $p$-Laplacian. On the other hand, this is no more true if the classical decomposition of \cite{gny} is used.

We mention that a similar behavior has been observed for upper bounds on the Neumann eigenvalues of linear elliptic operators of order $2m$, $m\in\mathbb N$ and density on Euclidean domains, see \cite{colbois_provenzano}. In particular, if $2\leq 2m\leq n$ uniform upper bounds hold, which morally correspond to the conformal upper bounds discussed in this paper. For $n<2m$ these bounds do not hold (counter-examples are provided in \cite{colbois_provenzano}).


We also recall that Lemma \ref{genCM-cor} has been exploited to prove upper bounds for the eigenvalues on $\Sigma$, where $\Sigma$ is an hypersurface in a complete $n$-dimensional Riemannian manifold $(M,g)$ bounding some smooth domain $\Omega$. In fact, upper bounds in term of the isoperimetric ratio for the Laplacian eigenvalues on $\Sigma$ are obtained in \cite{colboisgirouard}. As already mentioned, when we have a result obtained by means of Lemma \ref{genCM-cor}, that result is likely to hold also for the variational eigenvalues of the $p$-Laplacian. This is in fact the case. We include the precise statements of the analogous results for the variational eigenvalues of the $p$-Laplacian, along with their proofs, in the Appendix \ref{appendix}. 

The paper is organized as follows. In Section \ref{pre} we set the notation and recall some preliminary results. In Section \ref{sub_metric} we collect all the
main technical results of decomposition of a metric measure space by capacitors. In Section \ref{bounds} we prove Theorems \ref{main} and \ref{main_2}. In Appendix \ref{appendix} we discuss upper bounds on hypersurfaces in terms of the isoperimetric ratio.




\section{Preliminaries and notation}\label{pre}

By $W^{1,p}(\Omega)$ we denote the Sobolev space of functions $u\in L^p(\Omega)$ with weak first derivatives in $L^p(\Omega)$. The space $W^{1,p}(\Omega)$ is endowed with the norm

\begin{equation}\label{Wnorm}
\|u\|_{W^{1,p}(\Omega)}^p:=\int_{\Omega}|\nabla_g u|^p+|u|^p dv_g,
\end{equation}
where $\nabla_g$ denotes the gradient associated with the metric $g$ and $dv_g$ denotes the Riemannian volume element associated with $g$. For $u\in L^p(\Omega)$ we denote by $\|u\|_{L^p(\Omega)}$ its standard norm given by
\begin{equation}\label{Lnorm}
\|u\|_{L^{p}(\Omega)}^p:=\int_{\Omega}|u|^p dv_g,
\end{equation}

Problem \eqref{pLap} is understood in the weak sense, namely a couple $(u,\mu)\in W^{1,p}(\Omega)\times\mathbb R$ is a weak solution to \eqref{pLap} if and only if
\begin{equation}\label{pLap_weak}
\int_{\Omega}|\nabla_g u|^{p-1}\nabla_g u\cdot\nabla_g\phi dv_g=\mu\int_{\Omega}|u|^{p-2}u\phi dv_g\,,\ \ \ \forall\phi\in W^{1,p}(\Omega).
\end{equation}

A sequence of eigenvalues for \eqref{pLap_weak} can be obtained through the {\it Ljusternik-Schnirelman} (see \cite{an_le} for a detailed discussion). These eigenvalues, which form an increasing sequence of non-negative numbers diverging to $+\infty$, are called the {\it variational eigenvalues} as they can be characterized variationally as follows:
\begin{equation}\label{minmax}
\mu_{k,p}^g:=\inf_{F\in\Gamma_k}\sup_{u\in F}\mathcal R_p(u),
\end{equation}
where
\begin{equation}\label{Rayleigh}
\mathcal R_p(u):=\frac{\int_{\Omega}|\nabla_g u|^pdv_g}{\int_{\Omega}|u|^pdv_g}
\end{equation}
is the Rayleigh quotient of $u$. Here

\begin{equation}\label{Gammak}
\Gamma_k:=\left\{F\subset W^{1,p}(\Omega)\setminus\left\{0\right\}: F\cap\left\{u:\|u\|_{L^p(\Omega)}=1 \right\}{\rm\ compact,\ }F{\rm\ symmetric,\ }\gamma(F)\geq k\right\},
\end{equation}

and $\gamma(F)$ denotes the {\it Krasnoselskii genus} of $F$, which is defined by

\begin{equation}\label{genus}
\gamma(F):=\min\left\{\ell\in \mathbb N:{\rm\ there\ exists\ }f:F\rightarrow\mathbb R^{\ell}\setminus\left\{0\right\}{\rm\ continuous\ and\ odd}\right\}.
\end{equation}

In order to prove upper bounds for $\mu_{k,p}^g$ we need suitable sets $F_k\in \Gamma_k$ to test in \eqref{minmax}. The following lemma provides us a useful way to build such $F_k$.

\begin{lem}\label{disjoint_lem}
Let $k\in\mathbb N$, $k\geq 1$, and let $u_1,...u_k\in W^{1,p}(\Omega)$, with $u_i\ne 0$ and with pairwise disjoint supports $U_1, ..., U_k$. Let
$$
F_k:=\left\{\sum_{i=1}^k\alpha_iu_i: \alpha_i\in\mathbb R, \sum_{i=1}^k|\alpha_i|^p=1\right\}.
$$
Then $F_k\in\Gamma_k$.
\end{lem}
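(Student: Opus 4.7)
The plan is to realize $F_k$ as the odd-homeomorphic image of the $\ell^p$-unit sphere
$S_p^{k-1}:=\{\alpha\in\mathbb R^k:\sum_{i=1}^k|\alpha_i|^p=1\}$, and then read off each of the four defining properties of $\Gamma_k$ in turn. Define
$$
\phi:S_p^{k-1}\to W^{1,p}(\Omega),\qquad \phi(\alpha):=\sum_{i=1}^k\alpha_i u_i.
$$
By definition $\phi(S_p^{k-1})=F_k$ and $\phi(-\alpha)=-\phi(\alpha)$, so $F_k$ is symmetric. Continuity of $\phi$ is immediate from linearity together with the bound $\|\phi(\alpha)\|_{W^{1,p}}\le (\sum|\alpha_i|)\max_i\|u_i\|_{W^{1,p}}$.

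The key point, where the disjointness of the supports is used, is injectivity of $\phi$. If $\phi(\alpha)=\phi(\beta)$ in $W^{1,p}(\Omega)$, then $\sum(\alpha_i-\beta_i)u_i=0$ almost everywhere; restricting to $U_j$ and using that $u_i\equiv 0$ on $U_j$ for $i\ne j$, we get $(\alpha_j-\beta_j)u_j=0$ almost everywhere on $U_j$, and since $u_j\not\equiv 0$ we conclude $\alpha_j=\beta_j$ for every $j$. In particular, taking $\beta=0$, no element of $F_k$ is zero, so $F_k\subset W^{1,p}(\Omega)\setminus\{0\}$. Since $S_p^{k-1}$ is compact and $\phi$ is a continuous bijection onto $F_k$ in the Hausdorff space $W^{1,p}(\Omega)$, $\phi$ is an odd homeomorphism and $F_k$ is itself compact in $W^{1,p}(\Omega)$. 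The $L^p$-norm is continuous on $W^{1,p}(\Omega)$, so $\{u:\|u\|_{L^p}=1\}$ is closed, and its intersection with the compact set $F_k$ is compact, giving the second requirement of the definition of $\Gamma_k$.

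It remains to show $\gamma(F_k)\ge k$. The map $\alpha\mapsto\alpha/\|\alpha\|_p$ is a continuous odd homeomorphism from the Euclidean sphere $S^{k-1}$ onto $S_p^{k-1}$, with odd continuous inverse $\alpha\mapsto\alpha/\|\alpha\|_2$. Composing with $\phi$ produces an odd homeomorphism $S^{k-1}\to F_k$. By the monotonicity of the Krasnoselskii genus under odd continuous maps,
$$
\gamma(F_k)\ge\gamma(S^{k-1})=k,
$$
the last equality being the classical consequence of the Borsuk--Ulam theorem (any odd continuous $S^{k-1}\to\mathbb R^{k-1}$ must vanish somewhere). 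Combining all four properties shows $F_k\in\Gamma_k$.

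The only non-routine step is the injectivity of $\phi$, and even that is immediate once one exploits the hypothesis that the supports $U_1,\dots,U_k$ are pairwise disjoint; everything else is bookkeeping and standard facts about the genus. No further obstacle is anticipated.
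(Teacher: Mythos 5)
Your proof is correct and follows essentially the same route as the paper: identify $F_k$ with the $\ell^p$-sphere $S^{k-1}_p$ via an odd homeomorphism, and then compute the genus of the sphere via Borsuk--Ulam. You supply a few more details than the paper (the role of disjoint supports in injectivity, the compactness of $F_k$ itself, and the explicit passage from $S^{k-1}_p$ to the Euclidean sphere), but the underlying argument is the same.
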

\begin{proof}
Clearly $0\notin F_k$. Moreover, $F_k$ is symmetric and $F_k\cap\left\{u:\|u\|_{L^p(\Omega)=1} \right\}$ is compact. We show now that $\gamma(F_k)=k$. We define a map $f_k:F_k\rightarrow\mathbb R^k\setminus\left\{0\right\}$ by setting, for $u\in F_k$, $u=\sum_{i=1}^k\alpha_i u_i$,
$$
f_k(u)=\sum_{i=1}^k\alpha_i e_i^k,
$$
where $e_i^k$, $i=1,...,k$, denotes the standard basis of $\mathbb R^k$. 
The function $f_k$ is an  odd homeomorphism between $F_k$ and $\mathbb S^{k-1}_p:=\left\{x\in\mathbb R^k:\sum_{i=1}^k|x_i|^p=1\right\}$, which is the unit sphere of $\mathbb R^k$ with respect to the $\ell^p$ norm. This implies that $\gamma(F_k)=\gamma(\mathbb S^{k-1}_p)$ (see also \cite[Proposition 2.3]{szulkin}). Finally, by the Borsuk-Ulam Theorem we deduce that $\gamma(\mathbb S^{k-1}_p)=k$.
\end{proof}




\section{Decomposition of a metric measure space by capacitors}\label{sub_metric}

In this section we present the main technical tools which will be used to prove upper bounds for eigenvalues. We start with some definitions. 

We denote by $(X,{\rm dist},\varsigma)$ a metric measure space with a metric ${\rm dist}$ and a Borel measure $\varsigma$. We will call {\it capacitor} every couple $(A,D)$ of Borel sets of $X$ such that $A\subset D$. By an annulus in $X$ we mean any set $A\subset X$ of the form
\begin{equation*}
A=A(a,r,R)=\left\{x\in X:r<{\rm dist}(x,a)<R\right\},
\end{equation*}
where $a\in X$ and $0\leq r<R<+\infty$. By $2A$ we denote 
\begin{equation*}
2A=2A(a,r,R)=\left\{x\in X:\frac{r}{2}<{\rm dist}(x,a)<2R\right\}.
\end{equation*}
Moreover, for any $F\subset X$ and $r>0$ we denote the $r$-neighborhood of $F$ by $F^r$, namely
$$
F^r:=\left\{x\in X:{\rm dist}(x,F)<r\right\}.
$$

The prototype of the construction of a decomposition of a metric measure space by disjoint sets is given in \cite{gny}. We recall it here for the reader's convenience.

\begin{thm}[{\cite[Theorem 1.1]{gny}}]\label{gny}
Let $(X,{\rm dist},\varsigma)$ be a metric-measure space with $\varsigma$ a non-atomic finite  Borel measure. Assume that the following properties are satisfied:
\begin{enumerate}[i)]
\item there exists a constant $\Gamma$ such that any metric ball of radius $r$ can be covered by at most $\Gamma$ balls of radius $\frac{r}{2}$;
\item all metric balls in $X$ are precompact sets.
\end{enumerate}
Then for any integer $k$ there exists a sequence $\left\{A_i\right\}_{i=1}^k$ of $k$ annuli in $X$ such that, for any $i=1,...,k$
\begin{equation*}
\varsigma(A_i)\geq c\frac{\varsigma(X)}{k},
\end{equation*}
and the annuli $2A_i$ are pairwise disjoint. The constant $c$ depends only on the constant $\Gamma$ in i).
\end{thm}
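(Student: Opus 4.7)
The plan is to use an iterative construction combined with a local dichotomy lemma that, given a finite Borel measure on $X$, either produces an annulus of large $\varsigma$-measure whose doubling also has controlled measure, or detects that the measure is concentrated in a much smaller ball on which we can recurse. The covering hypothesis (i) plays the role of a doubling property; precompactness (ii) guarantees that all measures involved are finite.

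Concretely, I would first prove an auxiliary lemma of the form: \emph{for any centre $a \in X$ and any target mass $t$, there exist radii $r < R$ such that $\varsigma(A(a,r,R)) \geq c_1 t$ while $\varsigma(2 A(a,r,R)) \leq c_2 t$, with $c_1, c_2$ depending only on $\Gamma$, provided the ball $B(a,R_0)$ containing $a$ has mass $\geq t$.} The proof examines the dyadic shells $A(a, 2^{j} r_0, 2^{j+1} r_0)$; if none of them carried a $c_1$-fraction of the mass, the masses of $B(a, 2^j r_0)$ would grow geometrically in $j$, contradicting the bound one obtains by covering $B(a, 2^{j+1} r_0)$ by $\Gamma^{j+1}$ balls of radius $r_0$ (condition (i) iterated) and invoking finiteness of $\varsigma(X)$. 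A shell where the growth is mild gives the desired annulus, and a shell where the growth is large lets us zoom into a smaller ball carrying nearly all of the mass of $B(a, 2^{j+1} r_0)$, enabling recursion.

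Once this lemma is available, I would construct the $k$ annuli greedily. At step $i$, set $Y_i := X \setminus \bigcup_{j<i} 2 A_j$. Choose a centre $a_i \in Y_i$ such that some ball around it carries a large portion of the remaining mass (found by a standard covering argument: $Y_i$ can be covered by finitely many balls of a fixed scale, so at least one must carry $\varsigma(Y_i)$ divided by the number of balls). Apply the lemma to extract $A_i$ with $\varsigma(A_i) \geq c_1 \varsigma(X)/k$; by choosing the ``outer margin'' used in the lemma small enough relative to $c_1$, one can ensure that $2 A_i$ is contained in $Y_i$ and therefore disjoint from all previous $2 A_j$. Stopping after $k$ steps is possible because, at each step, one only consumes a $O(1/k)$ fraction of the total mass.

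The main obstacle will be the geometric/combinatorial argument that $2 A_i$ can actually be kept disjoint from the previously removed $2 A_j$ while still meeting the measure lower bound. The naive greedy argument loses a constant factor at every step, so some care is needed to make the losses additive rather than multiplicative in $k$; the standard trick is to run the construction at a hierarchy of scales and then harvest annuli from a tree of balls (so that different $A_i$ live at different scales and are automatically disjoint up to the $\Gamma$-doubling), rather than trying to achieve disjointness by shrinking radii on the fly. Condition (i) is what makes this tree finite at each level, and hypothesis (ii) is what allows us to speak of $\varsigma$-measure of all the balls involved.
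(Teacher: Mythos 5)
The paper does not prove this statement; it cites \cite{gny} (Theorem~1.1 there) and uses it as a black box, so there is no in-paper proof to compare against. Judged on its own terms, your sketch captures the correct spirit (a local dichotomy combined with a multi-scale harvesting of annuli), and you are right to flag that the difficulty is keeping $2A_i$ disjoint without losing a multiplicative factor at each step. But as written there are genuine gaps. First, the dichotomy argument is logically inverted: if none of the dyadic shells $A(a,2^{j}r_0,2^{j+1}r_0)$ carries a $c_1$-fraction of the mass, the ball masses $\varsigma(B(a,2^{j}r_0))$ grow \emph{slowly} in $j$, not geometrically, so the claimed contradiction with the covering bound does not arise. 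The actual GNY argument is subtler: one chooses the inner radius $r$ as the smallest scale at which $\varsigma(B(a,r))\geq t$, and compares $\varsigma(B(a,r))$ with $\varsigma(B(a,r/2))$ using the covering condition to generate the ``concentrate or find an annulus'' alternative; the shell argument you describe does not produce it. Second, the greedy step (``cover $Y_i$ by finitely many balls of a fixed scale, one must carry $\varsigma(Y_i)$ divided by the number of balls'') is not admissible as is: the number of such balls depends on $\operatorname{diam}(X)$ over the chosen scale, not on $\Gamma$ alone, so the constant $c$ you would obtain depends on the diameter of $X$ --- exactly the dependence the theorem must avoid (and which the paper explicitly warns against in the discussion following Theorem~\ref{gny}). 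Third, you correctly name the fix (run the construction along a tree of scales so that different $A_i$ live at incomparable scales or at well-separated balls of the same scale), but this is precisely the technically heavy core of the GNY proof --- with its careful bookkeeping of how much mass is ``consumed'' along each branch of the tree and how the covering constant controls branching --- and your sketch only gestures at it. Without carrying out that construction the proof is not complete, and the first two issues mean the ingredients you do spell out would need to be redone.
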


Theorem \ref{gny} provides a decomposition of a metric measure space by annuli of the size at least $c\frac{\varsigma(X)}{k}$. The common idea of the proofs of Theorems \ref{main} and \ref{main_2} is to build for each $k\in\mathbb N$, suitable test functions $u_i$ supported on $2A_i$ and such that $u_i=1$ on $A_i$, and then to compute their Rayleigh quotients. In principle, in order to obtain the estimate of Theorems \ref{main} and \ref{main_2} with this method, we need that the constant $c$ which controls the volume of the $A_i$'s depends only on $n$. This is true, by the Bishop-Gromov volume comparison Theorem, in the case ${\rm Ric}_g\geq 0$, but this is no longer true in the case of a negative lower bound on the Ricci curvature. The direct application of Theorem  \ref{gny} would lead to an estimate of the form \eqref{main_ineq}-\eqref{main_ineq_2}, but with $B_{p,n}$ depending also on the diameter of $\Omega$ (see also \cite{gny} for more details), and such an estimate is bad: the geometry of the domain enters as a multiplicative constant in front of the Weyl-term. Moreover, the diameter should not affect the upper bounds.

We state now the following lemma, which improves \cite[Lemma 4.1]{colboisgirouard}. This Lemma will be useful to construct a decomposition of a metric measure space with capacitors alternative to Theorem \ref{gny}. The proof of this Lemma can be found in \cite{colbois_provenzano_2}.

\begin{lem}\label{genCM}
Let $(X,{\rm dist},\varsigma)$ be a compact metric measure space with a finite measure $\varsigma$. Assume that for all $s>0$ there exists an integer $N(s)$ such that each ball of radius $5s$ can be covered by $N(s)$ balls of radius $s$. Let $\beta>0$ satisfying $\beta\leq \frac{\varsigma(X)}{2}$ and let $r>0$ be such that for all $x\in X$
$$
\varsigma(B(x,r))\leq\frac{\beta}{2N(r)}.
$$
Then there exist two open sets $A$ and $D$ of $X$ with $A\subset D$ such that:
\begin{enumerate}[i)]
\item $A=B(x_1,r)\cup\cdots\cup B(x_l,r)$ with ${\rm dist}(x_i,x_j)\geq 4r$ if $i\ne j$;
\item $D=A^{4r}=B(x_1,5r)\cup\cdots\cup B(x_l,5r)$;
\item $\varsigma(A)\geq\frac{\beta}{2N(r)}$, $\varsigma(D)\leq\beta$ and ${\rm dist}(A,D^c)\geq 4r$.
\end{enumerate}
\end{lem}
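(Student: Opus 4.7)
The plan is to build the desired family $\{x_1,\ldots,x_l\}$ by a greedy packing argument, selecting centers that are pairwise $4r$-separated while keeping the total measure of the inflated set $D$ under control. As a preliminary, I would extract from the covering hypothesis (applied at scale $s=r$) the auxiliary bound
$$
\varsigma(B(x,5r))\leq N(r)\cdot\frac{\beta}{2N(r)}=\frac{\beta}{2}\qquad\textrm{for every }x\in X,
$$
since each $5r$-ball is the union of $N(r)$ balls of radius $r$ and each of these has measure at most $\beta/(2N(r))$ by hypothesis. In particular adding a single $5r$-ball to a pre-existing $D$ can raise $\varsigma(D)$ by at most $\beta/2$, leaving room to collect at least one such ball while staying under the threshold $\beta$.

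\textbf{Greedy construction.} I would start from an arbitrary $x_1\in X$ and, having chosen $x_1,\ldots,x_j$, set $D_j:=\bigcup_{i\leq j}B(x_i,5r)$ and look for $x_{j+1}\in X$ satisfying $\mathrm{dist}(x_{j+1},\{x_1,\ldots,x_j\})\geq 4r$ and $\varsigma(D_j\cup B(x_{j+1},5r))\leq\beta$; at each step I would in addition pick an admissible $x_{j+1}$ which is near-extremal for $\varsigma(B(x,r))$. The procedure stops when no admissible candidate remains. Setting $A:=\bigcup_{j=1}^l B(x_j,r)$ and $D:=\bigcup_{j=1}^l B(x_j,5r)$ at termination, conclusions (i), (ii), and the measure bound $\varsigma(D)\leq\beta$ hold by construction; the separation $\mathrm{dist}(A,D^c)\geq 4r$ follows from $D=A^{4r}$ via the triangle inequality (for $y\in B(x_j,r)$ one has $B(y,4r)\subseteq B(x_j,5r)\subseteq D$).

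\textbf{Termination and lower bound.} I would first rule out the vacuous termination where no $4r$-separated candidate is left: in that case $X\subseteq\bigcup_j B(x_j,4r)\subseteq D$ would force $\varsigma(X)\leq\varsigma(D)\leq\beta$, contradicting $\beta\leq\varsigma(X)/2$. Hence the algorithm terminates because every admissible $y$ satisfies $\varsigma(D\cup B(y,5r))>\beta$, and combined with the preliminary estimate $\varsigma(B(y,5r))\leq\beta/2$ this yields $\varsigma(D)>\beta/2$. Applying the covering hypothesis to each of the $l$ balls $B(x_j,5r)$ that make up $D$, I get $\varsigma(D)\leq lN(r)\cdot\beta/(2N(r))=l\beta/2$, hence $l\geq 1$; the near-extremality of the greedy selection should then propagate this into the collective lower bound $\varsigma(A)\geq\beta/(2N(r))$.

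\textbf{Main obstacle.} The delicate point is precisely this last lower bound on $\varsigma(A)$: the termination analysis gives $\varsigma(D)>\beta/2$ without difficulty, but a priori this is compatible with each $B(x_j,r)$ being vanishingly small, with the mass of $D$ concentrated in the annular regions $B(x_j,5r)\setminus B(x_j,r)$. The extremal selection rule is what should prevent this. If $\varsigma(A)<\beta/(2N(r))$, one should be able to re-apply the covering hypothesis to $D$ to locate an $r$-ball of strictly larger measure than some $\varsigma(B(x_j,r))$, contradicting the maximality in the greedy step. Making this swap argument precise in the style of Vitali-type coverings is, I expect, the real content of the proof and the step where most of the technical work lies.
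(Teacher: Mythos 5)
You are right that the lower bound $\varsigma(A)\geq\beta/(2N(r))$ is the crux, and the gap you flag is genuine. Your greedy maximizes $\varsigma(B(\cdot,r))$ only over candidates that are $4r$-separated from the previous centers and satisfy the $\beta$-bound. When you cover $D$ by $r$-balls $B(y,r)$, nothing forces those $y$ to have been admissible at any stage: typically the dominant covering ball sits within $4r$ of some $x_m$ (deep in the annulus $B(x_m,5r)\setminus B(x_m,r)$), was therefore never a candidate, and the comparison $\varsigma(B(y,r))\leq\varsigma(B(x_j,r))$ was never tested by the greedy. The swap argument has nothing to hook onto, and I do not see how to close this with the objective $\varsigma(B(\cdot,r))$ and your admissibility rule.

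The actual proof avoids the issue by switching to a \emph{restricted} objective and dropping the admissibility constraints from the greedy altogether. One sets $\Omega_1=X$, takes $x_1$ maximizing $\varsigma(B(x,r)\cap\Omega_1)$ over \emph{all} $x\in X$, then $\Omega_{j+1}=X\setminus\bigl(B(x_1,5r)\cup\dots\cup B(x_j,5r)\bigr)$ and takes $x_{j+1}$ maximizing $\varsigma(B(x,r)\cap\Omega_{j+1})$, again over all $x$. The $4r$-separation comes for free: if ${\rm dist}(x,x_m)<4r$ for some $m\leq j$ then $B(x,r)\subset B(x_m,5r)$, so $\varsigma(B(x,r)\cap\Omega_{j+1})=0$ and such an $x$ is never selected. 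Crucially, maximality now holds against \emph{every} $y\in X$, in particular against every covering ball of $B(x_i,5r)$: each satisfies $\varsigma(B(y,r)\cap\Omega_i)\leq\varsigma(B(x_i,r)\cap\Omega_i)$, hence $\varsigma(B(x_i,5r)\cap\Omega_i)\leq N(r)\,\varsigma(B(x_i,r)\cap\Omega_i)$. Moreover the sets $B(x_i,5r)\cap\Omega_i$ partition $\bigcup_i B(x_i,5r)$. Running the greedy to exhaustion (it stops by compactness, with the $5r$-balls covering $X$ up to null measure) and summing gives $\sum_i\varsigma(B(x_i,r))\geq\varsigma(X)/N(r)\geq 2\beta/N(r)$. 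One then \emph{truncates} at the first $l$ with $\sum_{i\leq l}\varsigma(B(x_i,r))\geq\beta/(2N(r))$; since each term is at most $\beta/(2N(r))$, the truncated sum is at most $\beta/N(r)$, and the same partition and covering inequality applied to the truncated family give $\varsigma(D)\leq N(r)\sum_{i\leq l}\varsigma(B(x_i,r))\leq\beta$. So both ingredients you would need --- a maximality that applies to arbitrary covering balls, and simultaneous control of $\varsigma(A)$ from below and $\varsigma(D)$ from above --- come from the restricted objective and the run-to-exhaustion-then-truncate scheme, not from a near-extremal choice pruned by your on-the-fly $\beta$-bound.
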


A consequence of Lemma \ref{genCM} is the following result providing a decomposition of a metric measure space by capacitors which is alternative to that of Theorem \ref{gny}.

\begin{lem}\label{genCM-cor}
Let $(X,{\rm dist},\varsigma)$ be a compact metric measure space with a finite measure $\varsigma$. Assume that for all $s>0$ there exists an integer $N(s)$ such that each ball of radius $5s$ can be covered by $N(s)$ balls of radius $s$. If there exists an integer $k>0$ and a real number $r>0$ such that, for each $x\in X$
$$
\varsigma(B(x,r))\leq\frac{\varsigma(X)}{4N(r)^2k},
$$
then there exist $k$ $\varsigma$-measurable subsets $A_1,...,A_k$ of $X$ such that
$$
\varsigma(A_i)\geq\frac{\varsigma(X)}{2N(r)k},
$$
for all $i\leq k$, ${\rm dist}(A_i,A_j)\geq 4r$ for $i\ne j$, and 
$$
A_i=B(x^i_1,r)\cup\cdots\cup B(x^i_{l_i},r).
$$
\end{lem}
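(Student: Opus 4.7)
The plan is to prove Lemma~\ref{genCM-cor} by iterating Lemma~\ref{genCM} $k$ times, peeling off one capacitor at each step. At step $i$, with $(A_j, D_j)_{j<i}$ already constructed, I set $X_i := X \setminus \bigcup_{j<i} D_j$ and $\varsigma_i := \varsigma\lvert_{X_i}$. The metric and the covering function $N(s)$ on $(X, \mathrm{dist})$ are unchanged, and the small-ball bound is inherited: $\varsigma_i(B(x,r)) \le \varsigma(B(x,r)) \le \varsigma(X)/(4 N(r)^2 k)$. Applying Lemma~\ref{genCM} to $(X, \mathrm{dist}, \varsigma_i)$ with parameter $\beta = \varsigma(X)/k$ then yields $(A_i, D_i)$ with $A_i$ a union of balls of radius $r$ centered at points $x^i_1, \dots, x^i_{\ell_i}$ at mutual distance $\ge 4r$, with $\varsigma(A_i) = \varsigma_i(A_i) \ge \beta/(2N(r)) = \varsigma(X)/(2N(r)k)$, and with $D_i \supset A_i^{4r}$. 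This reproduces the claimed form of each $A_i$ and the stated lower bound on its measure.

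The geometric separation $\mathrm{dist}(A_i, A_j) \ge 4r$ for $j<i$ follows from the restriction: since $\varsigma_i$ is supported in $X_i$, the centers $x^i_\ell$ that Lemma~\ref{genCM} selects lie outside each $D_j$, and $D_j = A_j^{4r}$ already comprises every point within $4r$ of $A_j$. To upgrade this from the centers to the full balls $B(x^i_\ell, r)$, I would absorb an extra $r$ by the same accounting: either by working at step $j$ with the slightly enlarged exclusion $A_j^{5r}$ instead of $A_j^{4r}$ (and using the covering axiom to control its measure), or by noting that a rescaling of $r$ in the hypothesis gives equivalent statement and matching constants. Once this separation is in place, the pairs $(A_i, A_j)$ satisfy all three requirements listed in the statement.

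The main obstacle is bookkeeping: ensuring that the hypothesis $\beta \le \varsigma_i(X)/2$ of Lemma~\ref{genCM} survives through all $k$ iterations. Since $\varsigma(D_j)\le \beta$ for every $j$, one has $\varsigma_i(X)\ge \varsigma(X)-(i-1)\beta$, which with $\beta=\varsigma(X)/k$ is comfortably $\ge 2\beta$ only up to $i \le k-1$. The strong assumption $\varsigma(B(x,r)) \le \varsigma(X)/(4 N(r)^2 k)$ is precisely what provides the slack to close this gap: it allows $\beta$ to be taken strictly smaller than $\varsigma(X)/k$ (for instance $\beta=\varsigma(X)/(k+1)$, or adaptively $\beta_i = \varsigma_i(X)/(2(k-i+1))$), the ball condition $\varsigma_i(B(x,r)) \le \beta/(2N(r))$ still holding with room to spare thanks to the extra $N(r)$ in the denominator. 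A brief calculation then shows $\beta/(2N(r)) \ge \varsigma(X)/(2N(r)k)$ at every step and all $k$ capacitors are produced. This volume-budget accounting is the only delicate part of the argument; the rest is a direct transcription of Lemma~\ref{genCM}.
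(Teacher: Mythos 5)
Your strategy of iterating Lemma~\ref{genCM} on the restricted measure is the right one and is, in spirit, what the paper attributes to \cite[Lemma 2.1]{colboisgirouard}; however, two of your steps do not actually go through as written. First, the assertion that restricting the measure to $X_i$ forces the centers $x^i_\ell$ selected by Lemma~\ref{genCM} to lie outside each $D_j$ is unjustified. In the greedy construction behind Lemma~\ref{genCM} the center is chosen to maximize $\varsigma_i(B(x,r)\cap\Omega)$ over \emph{all} $x\in X$, and the maximizer may sit inside some $D_j$ while $B(x,r)\cap X_i$ still carries positive $\varsigma_i$-mass. To force the centers into $X_i$ one must restrict the maximization, and then the covering step of the proof (which compares $\varsigma_i(B(x^i_\ell,5r))$ against the maximal ball measure via an $N(r)$-cover whose centers are themselves unconstrained) has to be re-examined. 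Moreover, even granting $x^i_\ell\in X_i$, i.e.\ ${\rm dist}(x^i_\ell,A_j)\ge 4r$, a point of $B(x^i_\ell,r)$ is only guaranteed to lie at distance $\ge 3r$ from $A_j$, not $4r$. Your proposed fix of excluding $A_j^{5r}$ brings the centers to distance $5r$ but the balls still only to $4r-r=3r$ once the unconstrained-center issue is accounted for, and pushing the exclusion further changes the relevant covering constant. The $4r$ separation therefore needs a real recalibration of radii that you have not carried out.

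Second, the budget at the last step. With $\beta=\varsigma(X)/k$ the bound $\varsigma(A_i)\ge\beta/(2N(r))=\varsigma(X)/(2N(r)k)$ is exactly the stated one, but, as you note, the hypothesis $\beta\le\varsigma_i(X)/2$ can fail at $i=k$. Neither of your remedies recovers the stated constant: taking $\beta=\varsigma(X)/(k+1)$ yields $\varsigma(A_i)\ge\varsigma(X)/(2N(r)(k+1))$, strictly weaker than required, while the adaptive choice $\beta_i=\varsigma_i(X)/(2(k-i+1))$, combined with the induction $\varsigma_i(X)\ge\varsigma(X)(k-i+1)/k$, only gives $\varsigma(A_i)\ge\varsigma(X)/(4N(r)k)$, off by a factor of two. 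Your claim that a brief calculation shows $\beta/(2N(r))\ge\varsigma(X)/(2N(r)k)$ is incompatible with either choice, since that inequality forces $\beta\ge\varsigma(X)/k$. The extra factor $N(r)$ in the hypothesis of the lemma (which makes each $\varsigma(D_j)$ strictly smaller than $\beta$, leaving more measure for the last step) is indeed the slack that should close this gap, but that computation still has to be done; as written the argument does not establish the bound in the statement.
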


The proof of Lemma \ref{genCM-cor} is a consequence of Lemma \ref{genCM} and follows exactly the same lines of the proof of \cite[Lemma 2.1]{colboisgirouard}. We remark that \cite[Lemma 2.1]{colboisgirouard} provides a decomposition of a metric measure space by capacitors given by union of balls. In Lemma \ref{genCM-cor} the decomposition is given by unions of {\it disjoint} balls.

A clever merging of Theorem \ref{gny} and Lemma \ref{genCM-cor} allows to obtain the following Theorem, which provides a further construction of disjoint families of capacitors. This is the construction that we will use in the proofs of Theorems \ref{main} and \ref{main_2}. Its proof follows exactly the same lines as those of \cite[Theorem 2.1]{asma_conf}. In fact, the substantial difference is the use of Lemma \ref{genCM-cor} instead of \cite[Lemma 2.3]{asma_conf} (see also \cite[Lemma 2.1]{colboisgirouard} and \cite[Corollary 2.3]{colbois_maerten}).

\begin{thm}\label{corollary_small_annuli}
Let $(X,{\rm dist},\varsigma)$ be a compact metric-measure space with $\varsigma$ a non-atomic finite  Borel measure and let $a>0$. Assume that there exists a constant $\Gamma$ such that any metric ball of radius $0<r\leq a$ can be covered by at most $\Gamma$ balls of radius $\frac{r}{2}$. Then, for every $k\in\mathbb N$ there exists two families $\left\{A_i\right\}_{i=1}^k$ and  $\left\{D_i\right\}_{i=1}^k$ of Borel subsets of $X$ such that $A_i\subset D_i$, with the following properties:
\begin{enumerate}[i)]
\item $\varsigma(A_i)\geq c\frac{\varsigma(X)}{k}$, where $c$ depends only on $\Gamma$;
\item $D_i$ are pairwise disjoint;
\item the two families have one of the following form:
\begin{enumerate}[a)]
\item all the $A_i$ are annuli and $D_i=2A_i$, with outer radii smaller than $a$, or
\item all the $A_i$ are of the form $A_i=B(x_1^i,r_0)\cup\cdots\cup B(x_{l_i}^i,r_0)$, $G_i=A_i^{4r_0}$ and ${\rm dist}(x_k^i,x_l^i)\geq 4r_0$, where $r_0=\frac{4a}{1600}$.
\end{enumerate}
\end{enumerate}
\end{thm}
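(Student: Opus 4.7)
The plan is to prove Theorem \ref{corollary_small_annuli} by a dichotomy at the fixed scale $r_0 = a/400$, asking whether the measure $\varsigma$ is ``spread out'' or ``concentrated'' at that scale. First I would use the local packing hypothesis iteratively: since every ball of radius $r \leq a$ is covered by $\Gamma$ balls of radius $r/2$, halving $\lceil \log_2(a/r_0) \rceil$ times gives a uniform constant $N_0 = N_0(\Gamma)$ such that any ball of radius $5s$ with $s \leq r_0$ admits a covering by at most $N_0$ balls of radius $s$. This uniform covering constant is what lets the hypotheses of both Lemma \ref{genCM-cor} and the greedy step of Theorem \ref{gny} be applied with a constant depending only on $\Gamma$.

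Next comes the dichotomy. In the \emph{spread-out case}, suppose that
\[
\sup_{x \in X} \varsigma(B(x,r_0)) \;\leq\; \frac{\varsigma(X)}{4N_0^2 k}.
\]
Then Lemma \ref{genCM-cor} applies directly with $r = r_0$ and produces subsets $A_i = B(x_1^i, r_0) \cup \dots \cup B(x_{l_i}^i, r_0)$ with centers mutually $4r_0$-separated, pairwise distances $\mathrm{dist}(A_i, A_j) \geq 4r_0$, and mass $\varsigma(A_i) \geq \varsigma(X)/(2N_0 k)$. Taking $D_i = A_i^{4r_0}$ produces the desired family in form (b); absorbing $1/(2N_0)$ into the constant $c$ gives property (i), and the mutual $4r_0$-separation gives (ii).

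In the \emph{concentrated case}, some ball $B(x_0, r_0)$ has mass exceeding $\varsigma(X)/(4N_0^2 k)$. I would then mimic the greedy iteration underlying Theorem \ref{gny}, but run it with the constraint that all annuli extracted have outer radius bounded by $a$. At each step, one either finds a point where the $\varsigma$-mass doubles as the radius doubles ``slowly enough'' to permit pulling out an annulus $A = A(x, r, 2r)$ of mass $\geq c \varsigma(X)/k$ with $2r \leq a$, removes $2A$ from the ambient space, and iterates. The uniform covering constant $N_0$ at scales $\leq a$ ensures that the number of competing centers during the greedy step is controlled by $\Gamma$ alone, so the procedure terminates after $k$ steps with annuli satisfying (i) and with $2A_i$ pairwise disjoint, giving form (a).

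The main obstacle is the concentrated case: one has to verify that, under the hypothesis failing in the spread-out case, the greedy extraction of annuli can indeed be carried out $k$ times without ever needing a ball of radius exceeding $a$. This requires balancing the loss of mass at each removal step against the mass lower bound at the concentrated point, and is precisely the delicate quantitative step carried out in \cite[Theorem 2.1]{asma_conf}; substituting Lemma \ref{genCM-cor} for \cite[Lemma 2.3]{asma_conf} at the appropriate point of that argument makes the proof go through verbatim.
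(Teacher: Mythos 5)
Your proposal takes essentially the same route as the paper, which does not give a standalone proof but simply notes that the argument follows \cite[Theorem 2.1]{asma_conf} verbatim with Lemma \ref{genCM-cor} replacing \cite[Lemma 2.3]{asma_conf}. Your dichotomy at scale $r_0=a/400$ (if $\sup_x\varsigma(B(x,r_0))\leq\varsigma(X)/(4N_0^2k)$ apply Lemma \ref{genCM-cor} to get family (b); otherwise run the GNY extraction constrained to outer radius $<a$ to get family (a)) is precisely the mechanism of that cited argument, and you correctly flag the one delicate quantitative point --- showing that under the concentration hypothesis the greedy GNY extraction never needs an annulus of outer radius exceeding $a$ --- and defer it to \cite{asma_conf}, exactly as the paper does. (Two harmless inaccuracies: the number of halvings needed to cover a $5s$-ball by $s$-balls is $\lceil\log_2 5\rceil=3$, not $\lceil\log_2(a/r_0)\rceil$, so $N_0=\Gamma^3$ suffices; and the GNY annuli are not generally of the form $A(x,r,2r)$ with ratio exactly $2$. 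Neither affects the validity of the sketch.)
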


We remark that for a sufficiently large integer $k$ it is always possible to apply the construction of Theorem \ref{gny} and obtain a decomposition of the metric measure space by annuli (Theorem \ref{corollary_small_annuli} $i)$,$ii)$ and $iii)$-$a)$). In particular we have the following.

\begin{lem}\label{large_n}
Assume that the hypothesis of Theorem \ref{corollary_small_annuli} hold. Then there exists an integer $k_X$ such that for every $k\geq k_X$ there exists two families $\left\{A_i\right\}_{i=1}^k$ and  $\left\{D_i\right\}_{i=1}^k$ of Borel subsets of $X$ such that $A_i\subset D_i$ satisfying $i)$,$ii)$ and $iii)$-$a)$ of Theorem \ref{corollary_small_annuli}.
\end{lem}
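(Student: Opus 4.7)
The plan is to obtain Lemma \ref{large_n} as a by-product of the very dichotomy used in the proof of Theorem \ref{corollary_small_annuli}: the option b) construction relies on a volume condition that cannot hold once $k$ exceeds a threshold depending only on $X$, so for such $k$ the argument is forced into option a). I proceed as follows.

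First, since $X$ is compact and $\varsigma$ is a non-atomic finite Borel measure, covering $X$ by finitely many balls of radius $r_0 = 4a/1600$ and using sub-additivity, there exists a point $x_0 \in X$ with
$$
v_0 := \varsigma\bigl(B(x_0, r_0)\bigr) > 0.
$$
This $v_0$ is a purely metric-measure-theoretic quantity, independent of $k$.

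Second, the option b) of Theorem \ref{corollary_small_annuli} is obtained by invoking Lemma \ref{genCM-cor} at scale $r_0$, whose hypothesis requires $\varsigma(B(x,r_0)) \leq \varsigma(X)/\bigl(4 N(r_0)^2 k\bigr)$ for every $x\in X$. Setting
$$
k_X := \left\lceil \frac{\varsigma(X)}{4\, N(r_0)^2\, v_0} \right\rceil + 1,
$$
for every $k \geq k_X$ the bound fails at the point $x_0$, so Lemma \ref{genCM-cor} does not apply, and therefore option b) is not accessible by the construction.

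Third, in the proof of Theorem \ref{corollary_small_annuli} (adapted from \cite[Theorem 2.1]{asma_conf}) the failure of the volume hypothesis at scale $r_0$ is precisely the trigger that produces a point of mass concentration and one then applies the GNY construction of Theorem \ref{gny} inside a relatively compact region whose diameter is controlled by $a$. This yields annuli $A_i$ with outer radii smaller than $a$, pairwise disjoint doubled annuli $D_i = 2A_i$, and $\varsigma(A_i) \geq c\,\varsigma(X)/k$ with $c$ depending only on $\Gamma$, i.e.\ exactly items $i)$, $ii)$ and $iii)$-$a)$ of Theorem \ref{corollary_small_annuli}.

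The main (minor) obstacle is to verify that the proof of Theorem \ref{corollary_small_annuli} really is structured as a clean ``either-or'' between the two options, so that the unavailability of option b) automatically produces option a). Granted that structure -- which is already used in \cite{asma_conf} and in the construction outlined in Section \ref{sub_metric} -- the lemma reduces to the quantitative choice of $k_X$ exhibited above.
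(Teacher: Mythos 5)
Your proposal correctly identifies the key quantitative step: the threshold $k_X$ beyond which the volume hypothesis of Lemma \ref{genCM-cor} at scale $r_0=\frac{4a}{1600}$ necessarily fails (since $\varsigma(B(x_0,r_0))=v_0>0$ is independent of $k$), so that the construction in the proof of Theorem \ref{corollary_small_annuli} can no longer produce a family of type $iii)$-$b)$ and must instead deliver annuli as in $iii)$-$a)$. This is exactly the mechanism behind the cited \cite[Proposition 2.1]{asma_conf}, to which the paper defers for the proof, so the approach is essentially the same. One caution: the obstacle you flag at the end is not merely cosmetic — it is not automatic that ``option $b)$ unavailable'' yields annuli with outer radii $<a$; one must also check that when the mass concentration at scale $r_0$ occurs, the GNY-type construction (e.g.\ via a truncated metric and the inner-radius bound of Corollary \ref{corollarygny0}) really produces annuli whose outer radii stay below $a$. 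That verification is the genuine content of the cited proposition and would need to be spelled out for the argument to be self-contained; the threshold $k_X$ and the dichotomy you exhibit are nonetheless on target.
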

We refer to \cite[Proposition 2.1]{asma_conf} for the proof of Lemma \ref{large_n}.

We state now a useful corollary of Theorem \ref{gny} which gives a lower bound of the inner radius of the annuli of the decomposition, see \cite[Remark\,3.13]{gny}.
\begin{cor}\label{corollarygny0}
Let the assumptions of Theorem \ref{gny} hold. Then each annulus $A_i$ has either internal radius $r_i$ such that
\begin{equation}\label{gny-rad-est}
r_i\geq\frac{1}{2}\inf\left\{r\in\mathbb R:V(r)\geq v_k\right\},
\end{equation}
where $V(r):=\sup_{x\in X}\varsigma(B(x,r))$ and $v_k=c\frac{\varsigma(X)}{k}$ , or is a ball of radius $r_i$ satisfying \eqref{gny-rad-est}.
\end{cor}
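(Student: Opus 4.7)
The plan is to read the conclusion off the structure of the annular decomposition that is constructed in the proof of Theorem~\ref{gny}, keeping careful track of which balls are forced to carry mass at least $v_k$. Set $r^{\star}:=\inf\{r>0:V(r)\geq v_k\}$; by the very definition of $V$, every ball of radius strictly less than $r^{\star}$ has $\varsigma$-measure strictly less than $v_k$. Everything will be extracted from this one observation combined with the selection rules inside the construction.

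First I would dispose of the easy case in which $A_i$ happens to be a ball (so its "inner radius" is zero). Writing $A_i=B(a_i,\rho)$, part i) of Theorem~\ref{gny} gives $\varsigma(B(a_i,\rho))=\varsigma(A_i)\geq v_k$, hence $V(\rho)\geq v_k$, and therefore $\rho\geq r^{\star}\geq \tfrac12 r^{\star}$, which is the version of \eqref{gny-rad-est} asserted in that case.

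For a genuine annulus $A_i=A(a_i,r_i,R_i)$ with $r_i>0$, the plan is to reduce the claim to the inequality $\varsigma(B(a_i,2r_i))\geq v_k$: once it is established, monotonicity of $V$ yields $V(2r_i)\geq v_k$, so $2r_i\geq r^{\star}$ and thus $r_i\geq\tfrac12 r^{\star}$. To obtain this mass bound one revisits the recursive/doubling selection rule used in Theorem~\ref{gny}: the inner radius of each produced annulus is not chosen freely, but as (a multiple of) the smallest radius for which the enlarged central ball already contains a heavy portion of the piece of $X$ from which $A_i$ is being carved; if $B(a_i,2r_i)$ were lighter than $v_k$, the algorithm would have merged the core into the annulus and produced either a larger annulus or a ball, contradicting the fact that the chosen inner radius is $r_i$. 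This is exactly the content of \cite[Remark\,3.13]{gny}, to which the corollary appeals.

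The main obstacle, therefore, is not the estimate itself but rather making explicit the one quantitative fact that is hidden inside the proof of Theorem~\ref{gny}, namely the inequality $\varsigma(B(a_i,2r_i))\geq v_k$ (respectively $\varsigma(A_i)\geq v_k$ in the degenerate ball case). Once this is isolated the corollary follows in a single line from the definition of $r^{\star}$, uniformly in $i$, and without any further assumption on the space beyond what is already required for Theorem~\ref{gny}.
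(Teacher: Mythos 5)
Your proof is correct and takes essentially the same route as the paper: the paper offers no self-contained argument but simply points to \cite[Remark\,3.13]{gny}, and your proposal likewise reduces the radius estimate to a mass bound ($\varsigma(B(a_i,2r_i))\geq v_k$ for a genuine annulus, $\varsigma(A_i)\geq v_k$ for a ball) whose verification is in turn handed off to the construction underlying Theorem~\ref{gny}. The short passage you make explicit — from the mass bound to the radius bound via monotonicity of $V$ and the definition of the infimum — is precisely what the paper's citation leaves implicit.
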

It turns out that Corollary \ref{corollarygny0} applies to the case $iii)$-$a)$ of Theorem \ref{corollary_small_annuli}, see also \cite{asma_conf}.

\section{Proof of Theorems \ref{main} and \ref{main_2}}\label{bounds}

In this section we present the proofs of Theorems \ref{main} and \ref{main_2}.

\begin{proof}[Proof of Theorem \ref{main}]
We apply Theorem \ref{corollary_small_annuli} with $a=\frac{1}{\kappa}$ (if $\kappa=0$ we take $a=+\infty$). We take $X=\Omega$ endowed with the  Riemannian distance induced by $g_0$, and with the measure $\varsigma_g$ defined as the restriction to $\Omega$ of the Lebesgue measure of $M$ associated with the metric $g$, namely $\varsigma_g(E)=|E\cap\Omega|_g$ for all measurable sets $E$.

\noindent{\bf Step 1 (large $k$).} From Lemma \ref{large_n} we deduce that there exists $k_{\Omega}\in\mathbb N$ such that for all $k\geq k_{\Omega}$ there exists a sequence $\left\{A_i\right\}_{i=1}^{3k}$ of $3k$ annuli such that $2A_i$ are pairwise disjoint and 
\begin{equation}\label{ineq_gen_1}
|\Omega\cap A_i|_g\geq c\frac{|\Omega|_g}{3k}.
\end{equation}
The constant $c$ depends only on $\Gamma$ of Theorem \ref{corollary_small_annuli}, hence it depends only on the dimension $n$ and can be determined explicitly (see \cite{asma_conf}). Since we have $3k$ annuli, we can pick at least $k$ of them such that
\begin{equation}\label{ineq_gen_2}
|\Omega\cap 2A_i|_g\leq \frac{|\Omega|_g}{k}.
\end{equation}
We take this family of $k$ annuli, and denote it by $\left\{A_i\right\}_{i=1}^k$.


Subordinated to this decomposition  we construct a family of $k$ disjointly supported functions $u_1,...,u_k\in W^{1,p}(\Omega)$, with ${\rm supp}(u_i)=2A_i$. Let us take
$$
F_k:=\left\{\sum_{i=1}^k\alpha_iu_i:\alpha_i\in\mathbb R,\sum_{i=1}^k|\alpha_i|^p=1\right\}.
$$

From \eqref{minmax} and from Lemma \ref{disjoint_lem} we deduce that
\begin{equation}\label{minmax12}
\mu_{k,p}^g\leq\sup_{u\in F_k}\frac{\int_{\Omega}|\nabla_g u|^pdv_g}{\int_{\Omega}|u|^pdv_g},
\end{equation}
which in particular implies, since $u_i$ are disjointly supported, that

\begin{equation}\label{minmax2}
\mu_{k,p}^g\leq\max_{i=1,...,k}\frac{\int_{\Omega}|\nabla u_i|_g^pdv_g}{\int_{\Omega}|u_i|^pdv_g},
\end{equation}
Thus, in order to estimate $\mu_{k,p}^g$ it is sufficient to estimate the Rayleigh quotient of each of the test functions.

Now we describe explicitly the test functions $u_i$ which we will use in \eqref{minmax2}. Let  $f:[0,\infty)\rightarrow[0,1]$ be defined as follows:
\begin{equation}\label{f}
f(t)=
\begin{cases}
2-2t, & t\in\left[\frac{1}{2},1\right],\\
1, & t\in\left[0,\frac{1}{2}\right],\\
0, & t\in[1,+\infty[.
\end{cases}
\end{equation}

By construction $f\in C^{0,1}[0,+\infty)$. We consider test functions of the form $f(\eta\delta^{g_0}_p(\cdot))$ for some $\eta\in\mathbb R$ and $p\in M$, where $\delta^{g_0}_p(\cdot)$ denotes the Riemannian distance from a point $p\in M$ associated with the metric $g_0$. We note that
\begin{equation}\label{nabla_f}
\nabla_{g_0} f(\eta\delta^{g_0}_p(x))=\eta f'(\eta\delta^{g_0}_p(x))\nabla_{g_0}\delta^{g_0}_p(x).
\end{equation}
Standard computations, and the fact that $|\nabla_{g_0}\delta_p|\leq 1$ a.e. in $M$, show that
\begin{equation}\label{f1}
|\nabla_{g_0}f(\eta\delta^{g_0}_p(x))|\leq 2|\eta|.
\end{equation}
Let now $A_i$ be an annulus of the family $\left\{A_i\right\}_{i=1}^k$. We have two possibilities. Either $A_i$ is a proper annulus with  $0<r_i<R_i$, or is a ball of radius $r_i>0$.
\begin{enumerate}[{\bf Case} \bf a]
\item {\bf (ball).} Assume that $A_i$ is a ball of radius $r_i>0$ and center $p_i$. Associated to $A_i$ we define a function $u_i$ as follows
\begin{equation}\label{ball}
u_i(x)=
\begin{cases}
1, & 0\leq\delta^{g_0}_{p_i}(x)\leq\frac{r_i}{2}\\
f(\frac{\delta^{g_0}_{p_i}(x)}{2r_i}), & r_i\leq\delta^{g_0}_{p_i}(x)\leq 2r_i\\
0, & {\rm otherwise}. 
\end{cases}
\end{equation}
By construction, ${u_i}_{|_{\Omega}}\in W^{1,p}(\Omega)$. Standard computations (see \eqref{nabla_f}-\eqref{f1}) show that 
\begin{equation}\label{grad_ball}
|\nabla_{g_0} u_i|\leq \frac{1}{r_i}.
\end{equation}

This implies that
\begin{equation}\label{cap_ball}
\int_{\Omega\cap 2A_i}|\nabla_{g_0} u_i(x)|^ndv_{g_0}\leq \frac{1}{r_i^n}|\Omega\cap B(p_i,2r_i)|_{g_0}\leq 2^n\omega_n e^{2\kappa(n-1)r_i}\leq 2^n\omega_ne^{n-1},
\end{equation}
where we have used the Bishop-Gromov Theorem and the fact that $2r_i\leq a=\frac{1}{\kappa}$. In particular, Bishop-Gromov Theorem implies that $|B(x,r)|_{g_0}\leq |B(p',r)|_{\kappa}$ where $|B(p',r)|_{\kappa}$ denotes the volume of the ball of radius $r$ in the space form of constant curvature $-\kappa$, and we know that $|B(p',r)|_{\kappa}\leq \omega_n r^ne^{\kappa(n-1)r}$.

\item {\bf (annulus).} Assume that $A_i$ is a proper annulus  of radii $0<r_i<R_i$ and center $p_i$. Associated to $A_i$ we define a function $u_i$ as follows
\begin{equation}\label{prop_annulus}
u_i(x)=
\begin{cases}
1-f(\frac{\delta^{g_0}_{p_i}(x)}{r_i}), & \frac{r_i}{2}\leq\delta^{g_0}_{p_i}(x)\leq r_i\\
1 , & r_i\leq\delta^{g_0}_{p_i}(x)\leq R_i\\
f(\frac{\delta^{g_0}_{p_i}(x)}{2R_i}), & R_i\leq\delta^{g_0}_{p_i}(x)\leq 2R_i\\
0, & {\rm otherwise}.
\end{cases}
\end{equation}
By construction, ${u_i}_{|_{\Omega}}\in W^{1,p}(\Omega)$. Standard computations (see \eqref{nabla_f}-\eqref{f1}) show that
\begin{equation}
|\nabla_{g_0} u_i(x)|\leq 
\begin{cases}
\frac{1}{R_i}, & R_i\leq\delta^{g_0}_{p_i}(x)\leq 2R_i,\\
\frac{2}{r_i}, & \frac{r_i}{2}\leq\delta^{g_0}_{p_i}(x)\leq r_i,\\
0, & {\rm otherwise}.
\end{cases}
\end{equation}
As in \eqref{grad_ball} we have
\begin{equation}\label{grad_ann}
|\nabla_{g_0} u_i|\leq \frac{2}{r_i}.
\end{equation}
Moreover, as for \eqref{cap_ball}, it is possible to prove that
\begin{equation}\label{cap_ann}
\int_{\Omega\cap 2A_i}|\nabla_{g_0} u_i(x)|^ndv_{g_0}\leq \frac{2}{r_i^n}|\Omega\cap B(p_i,2r_i)|_{g_0}\leq 2^{n+1}\omega_ne^{n-1},
\end{equation}
\end{enumerate}

Both \eqref{cap_ball} and \eqref{cap_ann} imply that for any $u_i$ 
\begin{equation}\label{cap_tot}
\int_{\Omega\cap 2A_i}|\nabla_{g_0} u_i(x)|^ndv_{g_0}\leq 2^{n+1}\omega_ne^{n-1},
\end{equation}

Since $1<p\leq n$, using H\"older's inequality and inequality \eqref{cap_tot} we have that
\begin{multline}\label{boundary_est}
\int_{\Omega\cap 2A_i}|\nabla_g u_i|^pdv_g\leq \left(\int_{\Omega\cap 2A_i}|\nabla_g u_i|^ndv_g\right)^{\frac{p}{n}}|\Omega\cap 2A_i|_g^{1-\frac{p}{n}}\\
=\left(\int_{\Omega\cap 2A_i}|\nabla_{g_0} u_i|^ndv_{g_0}\right)^{\frac{p}{n}}|\Omega\cap 2A_i|_g^{1-\frac{p}{n}}
\leq 2^{\frac{(n-1)p}{n}}\omega_n^{\frac{p}{n}}e^{\frac{(n-1)p}{n}}|\Omega\cap 2A_i|_g^{1-\frac{p}{n}}\\
\leq \beta_{p,n}\left(\frac{|\Omega|_g}{k}\right)^{1-\frac{p}{n}},
\end{multline}
where
\begin{equation}\label{alpha_n}
\beta_{p,n}=2^{\frac{(n-1)p}{n}}\omega_n^{\frac{p}{n}}e^{\frac{(n-1)p}{n}}.
\end{equation}
The equality in \eqref{boundary_est} follows from the conformal invariance of the Dirichlet energy, namely $\int_{M}|\nabla_g f|^ndv_g=\int_{M}|\nabla_{g_0} f|^ndv_{g_0}$ for any $g\in[g_0]$.

As for the denominator we have
\begin{equation}\label{denominator}
\int_{\Omega\cap 2A_i}|u_i|^pdv_g\geq \int_{\Omega\cap A_i}|u_i|^pdv_g=|\Omega\cap A_i|_g\geq c\frac{|\Omega|_g}{3k}.
\end{equation}
Then, combining \eqref{boundary_est} and \eqref{denominator} we deduce that for all $i=1,...,k$
\begin{equation}\label{general_bound_laplacian_0}
\frac{\int_{\Omega}|\nabla_g u_i|^pdv_g}{\int_{\Omega}|u_i|^pdv_g}\leq B_{p,n} \left(\frac{k}{|\Omega|_g}\right)^{\frac{p}{n}}
\end{equation}
where $B_{p,n}:=3c^{-1}\beta_{p,n}$. We have proved that for any $k\geq k_{\Omega}$
\begin{equation}\label{large_j}
\mu_{k,p}^g\leq B_{p,n}\left(\frac{k}{|\Omega|_g}\right)^{\frac{p}{n}}.
\end{equation}
\noindent{\bf Step 2 (small $k$).} Let now $k<k_{\Omega}$ be fixed. By using Theorem \ref{corollary_small_annuli} as in Step 1, we find that there exists a sequence of $3k$ sets $\left\{A_i\right\}_{i=1}^{3k}$ such that $|\Omega\cap A_i|_g\geq c\frac{|\Omega|_g}{3k}$. If the sets $A_i$ are annuli, we can proceed as in Step 1 and deduce the validity of \eqref{large_j}. Assume now that $k$ is such that the sets $A_i$ of the decomposition are of the form
$$
A_i=B(x_1^i,r_0)\cup\cdots\cup B(x_{l_i}^i,r_0),
$$
where $r_0=\frac{4a}{1600}$, $D_i=A_i^{4r_0}$ are pairwise disjoint, and $\delta^{g_0}_{x_l^i}(x_j^i)\geq 4r_0$ if $l\ne j$. By definition $D_i=B(x_1^i,5r_0)\cup\cdots\cup B(x_{l_i}^i,5r_0)$. Since we have $3k$ disjoint sets $D_i$, we can pick $k$ of them such that $|\Omega\cap D_i|_g\leq\frac{|\Omega|_g}{k}$ and $|\Omega\cap D_i|_{g_0}\leq\frac{|\Omega|_{g_0}}{k}$. We take from now on this family of $k$ capacitors. Note that $D_i$ is a disjoint union of $l_i$ balls $B(x_1^i,5r_0), \cdots, B(x_{l_i}^i,5r_0)$ of radius $5r_0$. Associated to each $B(x_j^i,5r_0)$, $j=1,...,l$ we construct test functions $u_j^i$ as in \eqref{ball}. Then we define the function $u_i$ associated with the capacitor $(A_i,D_i)$ by setting $u_i=u_j^i$ on $B(x_j^i,5r_0)$. We have $k$ disjointly supported test functions in $W^{1,p}(\Omega)$. We estimate the Rayleigh quotient of each of the $u_i$ as in Step 1. As in \eqref{grad_ball} we estimate $|\nabla_{g_0} u_i|$. In particular, we find a  universal constant $c_0>0$  such that
\begin{equation}\label{est_small_j}
|\nabla_{g_0} u_i|\leq\frac{c_0}{r_0}.
\end{equation}
Moreover, as for \eqref{cap_tot}, it is easy to prove that
\begin{equation}\label{cap_step2}
\int_{\Omega\cap D_i}|\nabla_{g_0}u_i|^ndv_{g_0}\leq\gamma_n,
\end{equation}
where $\gamma_n>0$ depends only on $n$. Then, since $1<p\leq n$, thanks to H\"older's inequality, inequality \eqref{cap_step2}, and analogous computations of those in Step 1, we deduce that
\begin{multline}\label{num_step2}
\int_{\Omega\cap D_i}|\nabla_gu_i|^pdv_g\leq \left(\int_{\Omega\cap D_i}|\nabla_g u_i|^ndv_g\right)^{\frac{p}{n}}|\Omega\cap D_i|_g^{1-\frac{p}{n}}\\
=\left(\int_{\Omega\cap D_i}|\nabla_{g_0} u_i|^ndv_{g_0}\right)^{\frac{p}{n}}|\Omega\cap D_i|_g^{1-\frac{p}{n}}\leq\frac{c_0^p}{r_0^p}|\Omega\cap D_i|_{g_0}^{\frac{p}{n}}|\Omega\cap D_i|_g^{1-\frac{p}{n}}\\
\leq\frac{1600^pc_0^p\kappa^p}{4^p}|\Omega|_{g_0}^{\frac{p}{n}}|\Omega|_g^{1-\frac{p}{n}}k^{-1}.
\end{multline}
As for the denominator
\begin{equation}\label{den_step2}
\int_{\Omega\cap D_i}|u_i|^pdv_g\geq\int_{\Omega\cap A_i}|u_i|^pdv_g=|\Omega\cap A_i|_g\geq c\frac{|\Omega|_g}{3k}.
\end{equation}
From \eqref{num_step2} and \eqref{den_step2} we deduce that
$$
\frac{\int_{\Omega}|\nabla_g u_i|^pdv_g}{\int_{\Omega}|u_i|^pdv_g}\leq A_{p,n}\kappa^p\left(\frac{|\Omega|_{g_0}}{|\Omega|_g}\right)^{\frac{p}{n}},
$$
for all $i=1,...,k$, where $A_{p,n}>0$ depends only on $p$ and $n$. This immediately implies
\begin{equation}\label{small_j}
\mu_{k,p}^g\leq A_{p,n}\kappa^p\left(\frac{|\Omega|_{g_0}}{|\Omega|_g}\right)^{\frac{p}{n}}.
\end{equation}

The proof of \eqref{main_ineq} follows by combining \eqref{large_j} and \eqref{small_j}, possibly re-defining the constants $A_{p,n},B_{p,n}$.

\end{proof}

\begin{rem}
We point out that in the proof of Theorem \ref{main} the inequality \eqref{small_j} appears. Apparently this may look like a nonsense, in fact the right-hand side of the inequality does not depend on $k$. However, note that this situation may occur only for a finite number of eigenvalues $\mu_{p,k}$, since, starting from a certain $k_{\Omega}$ (of which it is possible in principle to give a lower bound), the capacitors of the decomposition given by \eqref{corollary_small_annuli} are of the form $iii)-a)$, hence the estimate \eqref{large_j} holds starting from a certain $k_{\Omega}$.

\end{rem}

\begin{rem}\label{remark_manifold}
We note that if $M$ is a compact manifold, we can choose $\Omega=M$. In this case, the proof of Theorem \ref{main} remains exactly the same. Thus inequality \eqref{main_ineq} holds also for compact manifolds. The left hand side of \eqref{main_ineq} does not depend on $g_0$. Hence, we can take the infimum with respect to $g_0\in[g]$ such that ${\rm Ric}_{g_0}\geq-(n-1)\kappa^2$, $\kappa\geq 0$. From this fact and from Definition \ref{min-conf-vol} we deduce the validity of \eqref{main_ineq_cor}
\end{rem}



We prove now Theorem \ref{main_2}. The proof is similar to that of Theorem \ref{main}, however, though if the differences  may seem slight and technical, they are substantial. In fact, as shown in \cite{matei}, Theorem \ref{main} does not hold for $p>n$.

In the statement and the proof of Theorem \ref{main_2} the metric $g$ is fixed (so everything will be expressed with respect to $g$). In particular, Theorem \ref{corollary_small_annuli} will be exploited with $X=\Omega$ and the Riemannian distance, and the measure induced by the fixed metric $g$. The test functions $u_i$ for the variational principle \eqref{minmax2} will be written in terms of the Riemannian distance associated to $g$, and not in terms of the Riemannian distance associated with some different $g_0\in[g]$ (as in the proof of Theorem \ref{main}).

\begin{proof}[Proof of Theorem \ref{main_2}]
We apply Theorem \ref{corollary_small_annuli} with $a=\frac{1}{\kappa}$ (if $\kappa=0$ we take $a=+\infty$). We take $X=\Omega$ endowed with the  Riemannian distance induced by $g$, and with the measure $\varsigma_g$ defined as the restriction to $\Omega$ of the Lebesgue measure of $M$ associated with the metric $g$, namely $\varsigma_g(E)=|E\cap\Omega|_g$ for all measurable sets $E$.

\noindent{\bf Step 1 (large $k$).} From Lemma \ref{large_n} we deduce that there exists $k_{\Omega}'\in\mathbb N$ such that for all $k\geq k_{\Omega}'$ there exists a sequence $\left\{A_i\right\}_{i=1}^{2k}$ of $2k$ annuli such that $2A_i$ are pairwise disjoint and 
\begin{equation}\label{ineq_gen_1_2}
|\Omega\cap A_i|_g\geq c'\frac{|\Omega|}{2k}.
\end{equation}
The numbers $k_{\Omega}'\in\mathbb N$ and $c'>0$ are in principle different from $k_{\Omega}$ and $c$ of the proof of Theorem \ref{main}. The constant $c'$ depends only on $\Gamma$ of Theorem \ref{corollary_small_annuli}, hence it depends only on the dimension $n$ and can be determined explicitly (see \cite{asma_conf}). Since we have $2k$ annuli, we can pick at least $k$ of them such that
\begin{equation}\label{ineq_gen_2_2}
|\Omega\cap 2A_i|_g\leq \frac{|\Omega|}{k}.
\end{equation}
We take this family of $k$ annuli, and denote it by $\left\{A_i\right\}_{i=1}^k$.


As in the proof of Theorem \ref{main}, subordinated to this decomposition  we construct a family of $k$ disjointly supported functions $u_1,...,u_k\in W^{1,p}(\Omega)$, with ${\rm supp}(u_i)=2A_i$. We also set $F_k:=\left\{\sum_{i=1}^k\alpha_iu_i:\alpha_i\in\mathbb R,\sum_{i=1}^k|\alpha_i|^p=1\right\}$. Inequality \eqref{minmax2} holds, thus, in order to estimate $\mu_{k,p}^g$ it is sufficient to estimate the Rayleigh quotient of each of the test functions. The test functions $u_i$ which we shall use are identical to those in the proof of Theorem \ref{main}, except for the fact that they are given in terms of the distance function associated with the fixed metric $g$.

In particular, if $A_i$ is a ball, the function $u_i$ associated with $A_i$ is given by \eqref{ball}, where we replace $\delta_p^{g_0}(x)$ by $\delta_p^g(x)$, while if $A_i$ is a proper annulus, the associated function $u_i$ is given by \eqref{prop_annulus}, where again we replace $\delta_p^{g_0}(x)$ by $\delta_p^g(x)$. It is straightforward, as in \eqref{grad_ball} and \eqref{grad_ann}, to prove that
\begin{equation}\label{grad_tot}
|\nabla_g u_i(x)|\leq \frac{2}{r_i}.
\end{equation}

Corollary \ref{corollarygny0} gives us information on the size of the radius $r_i$, in fact 
\begin{equation}
r_i\geq\frac{1}{2}\tilde r:=\frac{1}{2}\inf\mathcal B
\end{equation}
where
\begin{equation}
\mathcal B:=\left\{r\in\mathbb R:V(r)\geq \frac{c'|\Omega|_g}{k}\right\}.
\end{equation}
The constant $c'$ depends only on the dimension and is the same constant as in \eqref{ineq_gen_1_2}. We observe that each $r\in\mathcal B$ is such that
$$
\frac{c'|\Omega|_g}{k}\leq V(r)=\sup_{x\in\Omega}|B(x,r)\cap\Omega|_g\leq |B(x,r)|_g\leq |B(p',r)|_{\kappa}
$$
by volume comparison, where $|B(p',r)|_{\kappa}$ denotes the volume of the ball of radius $r$ in the space form of constant curvature $-\kappa$. If $\kappa=0$ then each $r\in\mathcal B$ is such that
$$
\frac{c'|\Omega|_g}{k}\leq \omega_n r^n. 
$$
Hence any $r\in\mathcal B$ is such that
$$
r\geq\left(\frac{c'|\Omega|_g}{\omega_n k}\right)^{\frac{1}{n}},
$$
therefore
\begin{equation}\label{lower_ri}
r_i\geq\frac{1}{2}\left(\frac{c'|\Omega|_g}{\omega_n k}\right)^{\frac{1}{n}}.
\end{equation}
If $\kappa>0$, then $\tilde r\leq 2r_i\leq\frac{1}{\kappa}$ by construction, and since $\tilde r=\inf\mathcal B$, from volume comparison and standard calculus
$$
\frac{c|\Omega|_g}{k}\leq e^{n-1}\omega_n \tilde r^n.
$$
Therefore
\begin{equation}\label{lower_ri_2}
r_i\geq \frac{\tilde r}{2}\geq\frac{1}{2}\left(\frac{c'|\Omega|_g}{e^{n-1}\omega_n k}\right)^{\frac{1}{n}}.
\end{equation}
We note that \eqref{lower_ri} implies \eqref{lower_ri_2} which holds true for any $\kappa\geq 0$. We conclude, by \eqref{ineq_gen_2_2} and \eqref{lower_ri_2}, that
\begin{equation}\label{boundary_est_2}
\int_{\Omega\cap 2A_i}|\nabla_g u_i|^pdv_g\leq \frac{2^p}{r_i^p}|\Omega\cap 2A_i|_g\leq \beta_{p,n}\frac{|\Omega|_g}{k}\left(\frac{k}{|\Omega|_g}\right)^{\frac{p}{n}}
\end{equation}
where
\begin{equation}\label{alpha_n_2}
\beta_{p,n}'=2^{2p}\left(\frac{\omega_n e^{n-1}}{c'}\right)^{\frac{p}{n}}.
\end{equation}

As for the denominator we have
\begin{equation}\label{denominator_2}
\int_{\Omega\cap 2A_i}|u_i|^pdv_g\geq \int_{\Omega\cap A_i}|u_i|^pdv_g=|\Omega\cap A_i|_g\geq c'\frac{|\Omega|_g}{2k}.
\end{equation}
Then, combining \eqref{boundary_est_2} and \eqref{denominator_2} we deduce that for all $i=1,...,k$
\begin{equation}\label{general_bound_laplacian_0_2}
\frac{\int_{\Omega}|\nabla u_i|^pdv_g}{\int_{\Omega}|u_i|^pdv_g}\leq B_{p,n}' \left(\frac{k}{|\Omega|_g}\right)^{\frac{p}{n}}
\end{equation}
where $B_{p,n}':=2(c')^{-1}\beta_{p,n}'$. We have proved that for any $k\geq k_{\Omega}'$
\begin{equation}\label{large_j_2}
\mu_{k,p}^g\leq B_{p,n}'\left(\frac{k}{|\Omega|_g}\right)^{\frac{p}{n}}.
\end{equation}
\noindent{\bf Step 2 (small $k$).} Let now $k<k_{\Omega}'$ be fixed. By using Theorem \ref{corollary_small_annuli} as in Step 1, we find that there exists a sequence of $2k$ sets $\left\{A_i\right\}_{i=1}^{2k}$ such that $|\Omega\cap A_i|_g\geq c'\frac{|\Omega|_g}{2k}$. If the sets $A_i$ are annuli, we can proceed as in Step 1 and deduce the validity of \eqref{large_j_2}. Assume now that $k$ is such that the sets $A_i$ of the decomposition are of the form
$$
A_i=B(x_1^i,r_0)\cup\cdots\cup B(x_{l_i}^i,r_0),
$$
where $r_0=\frac{4a}{1600}$, $D_i=A_i^{4r_0}$ are pairwise disjoint, and $\delta^g_{x_l^i}(x_j^i)\geq 4r_0$ if $l\ne j$. By definition $D_i=B(x_1^i,5r_0)\cup\cdots\cup B(x_{l_i}^i,5r_0)$. Since we have $2k$ disjoint sets $D_i$, we can pick $k$ of them such that $|\Omega\cap D_i|_g\leq\frac{|\Omega|_g}{k}$. We take from now on this family of $k$ capacitors. Note that $D_i$ is a disjoint union of $l_i$ balls $B(x_1^i,5r_0), \cdots, B(x_{l_i}^i,5r_0)$ of radius $5r_0$. Associated to each $B(x_j^i,5r_0)$, $j=1,...,l$ we construct test functions $u_j^i$ as in \eqref{ball} with $\delta^{g_0}$ replaced by $\delta^g$. Then we define the function $u_i$ associated with the capacitor $(A_i,D_i)$ by setting $u_i=u_j^i$ on $B(x_j^i,5r_0)$. We have $k$ disjointly supported test functions in $W^{1,p}(\Omega)$. We estimate the Rayleigh quotient of each of the $u_i$ as in Step 1. As in \eqref{grad_ball} we estimate $|\nabla_g u_i|$. In particular, we find a  universal constant $c_0>0$  such that
\begin{equation}\label{est_small_j_2}
|\nabla_g u_i|\leq\frac{c_0'}{r_0}.
\end{equation}
Analogous computations of those in Step 1 (see \eqref{boundary_est_2} and \eqref{denominator_2}) allow us to conclude that
$$
\frac{\int_{\Omega}|\nabla u_i|^pdv_g}{\int_{\Omega}|u_i|^pdv_g}\leq \frac{A_{p,n}'}{a^p}=A_{p,n}\kappa^p,
$$
for all $i=1,...,k$, where $A_{p,n}'>0$ depends only on $p$ and $n$. This immediately implies
\begin{equation}\label{small_j_2}
\mu_{k,p}\leq A_{p,n}'\kappa^p.
\end{equation}

The proof of \eqref{main_ineq_2} follows by combining \eqref{large_j_2} and \eqref{small_j_2}, possibly re-defining the constants $A_{p,n}',B_{p,n}'$.

\end{proof}

\begin{rem}\label{remark_manifold_2}
We note that if $M$ is a compact manifold, we can choose $\Omega=M$. In this case, the proof of Theorem \ref{main_2} remains exactly the same. Thus inequality \eqref{main_ineq_2} holds also for compact manifolds. This is exactly the statement of Theorem \ref{cor_2}.
\end{rem}

\appendix\section{Isoperimetric bounds for compact hypersurfaces}\label{appendix}

Following \cite{colboisgirouard}, we observe that Lemma \ref{genCM-cor} allows to prove a number of further upper bounds for the eigenvalues of the $p$-Laplacian on a compact hypersurface in terms of the isoperimetric ratio. For the sake of completeness, we collect the results here. 

First, we need to introduce some notation. Let $(M,g)$ be a complete $n$-dimensional Riemannian manifold and let $\Omega$ be a bounded domain of $M$ with smooth boundary $\Sigma:=\partial\Omega$. We denote by $I(\Omega)$ the isoperimetric ratio of $\Omega$, namely
\begin{equation}\label{isop_ratio}
I(\Omega):=\frac{|\Sigma|}{|\Omega|^{\frac{n-1}{n}}}.
\end{equation}
We also denote by $I_0(\Omega)$ the quantity
\begin{equation}\label{isop_ratio_0}
I_0(\Omega):=\inf\{I(U):U\subset\Omega{\rm\ open\ }\}.
\end{equation}
For any $x\in M$ we denote by $r(x)$ the supremum of those $r\geq 0$ such that, for all $s\leq r$ one has both $|B(x,s)|\leq 2\omega_ns^n$ and $|\partial B(x,s)|\leq 2n\omega_n s^{n-1}$ (note that if ${\rm Ric}_g\geq 0$, then $r(x)=+\infty$ for all $x\in M$). We denote by $r_-(\Omega)$ the quantity
\begin{equation}\label{r-}
r_-(\Omega):=\inf_{x\in\Sigma}r(x).
\end{equation}
Finally, for any $r>0$ we denote by $N_M(r)$ an integer number such that for any $x\in M$ and $s<r$, the geodesic ball $B(x,5s)$ can be covered by $N_M(r)$ balls of radius $s$.

It is not in general easy to estimate $I_0(\Omega)$, $N_M(r)$ or $r_-(\Omega)$. However, having information such as a lower bound on ${\rm Ric}_g$ allows to control these quantities in a nice way. For example, if $M=\mathbb R^n$ with the Euclidean metric, and $\Omega$ is any smooth domain, then $I_0(\Omega)=n \omega_n^{\frac{1}{n}}$ (this follows from the isoperimetric inequality in $\mathbb R^n$), $N_M(r)\leq 40^n$ for any $r>0$ (this is a consequence of a standard packing lemma, see e.g., \cite[Lemma 3.1]{colboisgirouard} and references therein), and $r_-(\Omega)=+\infty$.

We are ready to state the following result, which is the key ingredient for the proof of the upper bounds for the eigenvalues of the $p$-Laplacian on hypersurfaces.

\begin{prop}\label{prop_hyp}
Let $(M,g)$ be a complete $n$-dimensional Riemannian manifold and let $\Omega$ be a bounded domain of $M$ with smooth boundary $\Sigma$. Let $\mu_{k,p}(\Sigma)$ denote the variational eigenvalues of the $p$-Laplacian on $\Sigma$ with the induced Riemannian metric. Let $0<r_0<r_-(\Omega)$ and let $k_0$ be the first integer to satisfy
$$
k_0>\frac{1}{25n\omega_n}\frac{I_0(\Omega)}{r_0^{n-1}}|\Omega|^{\frac{n-1}{n}}.
$$
Then for all $k\geq k_0$ we have
\begin{equation}\label{ineq_prop_hyp}
\mu_{k,p}(\Sigma)\leq 625(25n\omega_n)^{\frac{p}{n-1}}N_M(r_0)^2\left(\frac{I(\Omega)}{I_0(\Omega)}\right)^{1+\frac{p}{n-1}}\left(\frac{k}{|\Sigma|}\right)^{\frac{p}{n-1}}.
\end{equation}
\end{prop}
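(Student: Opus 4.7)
The plan is to transpose the argument of Theorem \ref{main_2} to the hypersurface setting, in the same spirit as in \cite{colboisgirouard} for the case $p=2$. As signalled in the paragraph preceding the appendix, the key is that Lemma \ref{genCM-cor} is the correct metric-decomposition tool to combine with the Ljusternik--Schnirelmann characterization \eqref{minmax} of $\mu_{k,p}(\Sigma)$.

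I would work in the metric measure space $(M, d_g, \sigma_\Sigma)$, where $\sigma_\Sigma$ is the hypersurface measure on $\Sigma$ viewed as a Borel measure on $M$. For each $k \ge k_0$ I would choose a scale $r(k)\le r_0$, decreasing in $k$, calibrated so that Lemma \ref{genCM-cor} can be applied at scale $r(k)$ with $2k$ capacitors. Verifying its hypothesis requires the pointwise upper bound
\[
\sigma_\Sigma(B(x, s)) \le c_n \, \frac{I(\Omega)}{I_0(\Omega)} \, s^{n-1}, \qquad x \in \Sigma, \; s \le r_-(\Omega),
\]
and this is where I expect the main obstacle. One obtains it by applying the definition of $I_0(\Omega)$ to the set $B(x,s)\cap\Omega$ (which yields a lower bound on $|\Sigma \cap B(x,s)|$ up to the correction $|\partial B(x,s) \cap \Omega|$) together with the ambient controls $|B(x,s)|\le 2\omega_n s^n$ and $|\partial B(x,s)|\le 2n\omega_n s^{n-1}$ valid in the range $s\le r_-(\Omega)$; the quotient $I(\Omega)/I_0(\Omega)$ enters when relating these local quantities to the global relation $|\Sigma| = I(\Omega)\,|\Omega|^{(n-1)/n}$. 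The constant $25 n\omega_n$ in the definition of $k_0$ is precisely the one guaranteeing that $r(k)\le r_0$ as soon as $k\ge k_0$.

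Once the hypothesis is verified, Lemma \ref{genCM-cor} provides $2k$ Borel sets $A_i \subset M$, each a union of pairwise $4r(k)$-separated balls of radius $r(k)$, with $\sigma_\Sigma(A_i) \ge |\Sigma|/(4 N_M(r_0) k)$. Setting $D_i := A_i^{4r(k)}$, the $D_i$ are pairwise disjoint in $M$, so $\sum_i \sigma_\Sigma(D_i)\le |\Sigma|$ and a standard pigeonhole step lets me retain $k$ indices satisfying $\sigma_\Sigma(D_i)\le 2|\Sigma|/k$. To each such capacitor I would attach the test function $u_i\in W^{1,p}(\Sigma)$ obtained as the restriction to $\Sigma$ of $f(d_g(\,\cdot\,,A_i)/r(k))$, with $f$ the cutoff from \eqref{f}; this gives $u_i\equiv 1$ on $A_i\cap\Sigma$, $\mathrm{supp}(u_i)\subset D_i\cap\Sigma$, and $|\nabla_\Sigma u_i|\le 2/r(k)$ $\sigma_\Sigma$-almost everywhere.

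The functions $u_1,\dots,u_k$ are disjointly supported in $\Sigma$, so Lemma \ref{disjoint_lem} yields an admissible set $F_k\in\Gamma_k$ for \eqref{minmax}, whence
\[
\mu_{k,p}(\Sigma) \le \max_i \frac{\int_\Sigma |\nabla_\Sigma u_i|^p\, d\sigma}{\int_\Sigma |u_i|^p\, d\sigma} \le \max_i \frac{(2/r(k))^p\,\sigma_\Sigma(D_i)}{\sigma_\Sigma(A_i)} \le C\, N_M(r_0)\, r(k)^{-p}.
\]
Substituting the expression for $r(k)^{-p}$ dictated by the scale selection and keeping track of numerical constants then yields \eqref{ineq_prop_hyp}. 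In summary, the only non-routine step is the pointwise isoperimetric estimate for $\sigma_\Sigma(B(x,s))$; the remainder is a faithful translation of the scheme of Theorem \ref{main_2} from the volume measure on $\Omega$ to the surface measure on $\Sigma$.
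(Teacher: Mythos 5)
The overall strategy --- apply Lemma~\ref{genCM-cor} and feed the resulting capacitors into Lemma~\ref{disjoint_lem} and the variational characterization~\eqref{minmax} --- is indeed the one the paper points to (it cites the proof of \cite[Proposition~2.1]{colboisgirouard}). However, the step you yourself flag as the main obstacle, namely verifying the hypothesis of Lemma~\ref{genCM-cor} for the surface measure $\sigma_\Sigma$, is where your sketch breaks down, and I think this is a genuine gap rather than a technicality you could fill in.

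You propose to verify the hypothesis via the pointwise bound $\sigma_\Sigma(B(x,s))\leq c_n\,\frac{I(\Omega)}{I_0(\Omega)}\,s^{n-1}$, and you sketch a derivation by applying the definition of $I_0(\Omega)$ to $U=B(x,s)\cap\Omega$. But the isoperimetric inequality $I_0(\Omega)\,|U|^{(n-1)/n}\leq |\partial U|\leq |\Sigma\cap B(x,s)|+|\partial B(x,s)\cap\Omega|$ produces a \emph{lower} bound on $\sigma_\Sigma(B(x,s))$ (modulo the spherical correction); it cannot, by itself, give the \emph{upper} bound you need. Worse, the hypothesis of Lemma~\ref{genCM-cor} at the scale $r(k)$ with $r(k)^{n-1}=\tfrac{I_0(\Omega)|\Omega|^{(n-1)/n}}{25n\omega_n k}$ forces the pointwise bound with the specific constant $c_n=\tfrac{25n\omega_n}{4N_M(r_0)^2}$, and this is false in general: for a Euclidean domain $\Omega\subset\mathbb R^3$ consisting of a unit cube with $m$ thin parallel fins attached, one has $I(\Omega)/I_0(\Omega)\sim m$, $r_-(\Omega)=+\infty$, $k_0$ a dimensional constant, while for $x$ in the region between the fins and $r(k)$ of order $1$ one has $\sigma_\Sigma(B(x,r(k)))\sim m\,r(k)^3$, which for $k$ near $k_0$ is larger than $\tfrac{|\Sigma|}{4N_M(r_0)^2 k}\sim m/N_M(r_0)^2$ by many orders of magnitude (recall $N_M(r_0)\leq 40^n$). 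So the decomposition of $(\Sigma,d_g,\sigma_\Sigma)$ at scale $r(k)$ simply does not exist for the lower range of admissible $k$, and the Rayleigh-quotient computation that follows never gets off the ground.

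Concretely: the isoperimetric hypotheses $I(\Omega)$, $I_0(\Omega)$, $r_-(\Omega)$ do not control $\sup_{x}\sigma_\Sigma(B(x,s))$ pointwise at the scale your argument requires, so the proof cannot be a straight transposition of the scheme of Theorem~\ref{main_2} with $\sigma_\Sigma$ as the decomposing measure. The correct argument in the style of \cite{colboisgirouard} must bring the isoperimetric information into the Rayleigh quotient differently --- in particular, the inequality you quote is most naturally used to \emph{lower} bound $\sigma_\Sigma(A_i)$ for the sets $A_i$ produced by the decomposition (this is exactly the direction it gives), rather than to verify a pointwise upper bound on ball measures. Your write-up needs to identify that mechanism explicitly; as it stands, the derivation of the key estimate is asserted in the wrong direction and is not recoverable by adjusting constants.
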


The proof of Proposition \ref{prop_hyp} can be carried out in the same way as that of \cite[Proposition 2.1]{colboisgirouard} (see also \cite[Theorem 1.5]{colbois_gittins}). In particular it makes use of Lemma \ref{genCM-cor} here above. Proposition \ref{prop_hyp} allows to prove the following theorem.

\begin{thm}\label{thm_hyp}
Let $(M,g)$ be a complete $n$-dimensional Riemannian manifold and let $\Omega$ be a bounded domain of $M$ with smooth boundary $\Sigma$. Let $\mu_{k,p}(\Sigma)$ denote the variational eigenvalues of the $p$-Laplacian on $\Sigma$ with the induced Riemannian metric. Then for any $r_0<\frac{r_-(\Omega)}{5}$ we have
\begin{equation}\label{ineq_thm_hyp}
\mu_{k,p}(\Sigma)\leq 625N_M(r_0)^2\frac{I(\Omega)}{I_0(\Omega)}\left[\frac{1}{r_0^p}+\left(25n\omega_n\frac{I(\Omega)}{I_0(\Omega)}\frac{k}{|\Sigma|}\right)^{\frac{p}{n-1}}\right],
\end{equation}
for all $k\in\mathbb N$
\end{thm}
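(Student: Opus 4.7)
The plan is to split by whether $k$ exceeds the threshold $k_0 = k_0(r_0)$ provided by Proposition \ref{prop_hyp}. Since $r_0 < r_-(\Omega)/5 < r_-(\Omega)$, Proposition \ref{prop_hyp} applies with this $r_0$ (the stricter condition $r_0 < r_-(\Omega)/5$ is needed only so that the $5r_0$-balls used inside the capacitor construction of Proposition \ref{prop_hyp} sit in the good volume-comparison range, and it imposes no extra constraint on the statement itself). This directly handles the large-$k$ regime: for $k \geq k_0$, inequality \eqref{ineq_prop_hyp} rearranges to
\[
\mu_{k,p}(\Sigma) \leq 625\, N_M(r_0)^2\, \frac{I(\Omega)}{I_0(\Omega)}\left(25n\omega_n\, \frac{I(\Omega)}{I_0(\Omega)}\, \frac{k}{|\Sigma|}\right)^{p/(n-1)},
\]
which is bounded by the full right-hand side of \eqref{ineq_thm_hyp} since the extra term $625 N_M(r_0)^2 (I/I_0)/r_0^p$ is positive.

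For $k < k_0$, I would use monotonicity of the variational eigenvalues to write $\mu_{k,p}(\Sigma) \leq \mu_{k_0,p}(\Sigma)$ and apply Proposition \ref{prop_hyp} at the threshold $k_0$. By definition of $k_0$,
\[
k_0 \leq 1 + \frac{1}{25n\omega_n}\, \frac{I_0(\Omega)}{r_0^{\,n-1}}\, |\Omega|^{(n-1)/n}.
\]
Combining this with the elementary identity $|\Omega|^{(n-1)/n}/|\Sigma| = 1/I(\Omega)$, the factor $(k_0/|\Sigma|)^{p/(n-1)}$ telescopes against $(25n\omega_n)^{p/(n-1)}$ and $(I/I_0)^{p/(n-1)}$ appearing in the conclusion of Proposition \ref{prop_hyp}, leaving (up to bounded constants) exactly the first bracketed term $625\, N_M(r_0)^2\, I(\Omega)/I_0(\Omega)\cdot r_0^{-p}$.

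The main obstacle in this plan is bookkeeping of the constant near the ``$+1$'' in the bound for $k_0$. The case $k_0 = 1$ is trivial because then only $k = 1$ is in the small-$k$ regime and $\mu_{1,p}(\Sigma) = 0$; when $k_0 \geq 2$ one has $k_0 \leq 2\cdot\frac{1}{25n\omega_n} I_0(\Omega)r_0^{-(n-1)}|\Omega|^{(n-1)/n}$, producing a harmless multiplicative factor $2^{p/(n-1)}$ that can either be absorbed into the universal constant or eliminated by a slightly more careful construction. To recover the exact prefactor $625$ stated in \eqref{ineq_thm_hyp} one can bypass monotonicity altogether and construct, for $k < k_0$, $k$ disjoint capacitors of scale $r_0$ on $\Sigma$ via Lemma \ref{genCM-cor} together with the Lipschitz cutoff scheme already used in the proof of Theorem \ref{main_2}, each producing a Rayleigh quotient $\lesssim r_0^{-p}$. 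Either route delivers \eqref{ineq_thm_hyp} for all $k \in \mathbb{N}$.
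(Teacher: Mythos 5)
Your proposal matches the paper's own proof: apply Proposition \ref{prop_hyp} directly for $k \ge k_0$, and for $k < k_0$ invoke monotonicity $\mu_{k,p}(\Sigma) \le \mu_{k_0,p}(\Sigma)$ together with the bound $k_0 - 1 \le \frac{1}{25n\omega_n}\frac{I_0(\Omega)}{r_0^{\,n-1}}\frac{|\Sigma|}{I(\Omega)}$ and the identity $|\Omega|^{(n-1)/n} = |\Sigma|/I(\Omega)$. The paper folds the ``$+1$'' into the estimate $\max\{k_0^{q}, k^{q}\} \le (k_0-1)^{q} + k^{q}$ with $q = p/(n-1)$, which yields both bracketed terms at once; your caveat that this bookkeeping is delicate when $q > 1$ is well founded and applies to the paper's version as well, so either the direct-construction fallback you sketch or a modest enlargement of the numerical prefactor is the right way to close it.
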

\begin{proof}
If $k\geq k_0$ this follows immediately from Proposition \eqref{prop_hyp}. If $k<k_0$, then $\mu_{k,p}(\Sigma)\leq\mu_{k_0,p}(\Sigma)$. Then, from Proposition \eqref{prop_hyp} it follows that
\begin{equation}\label{proof0}
\mu_{k,p}(\Sigma)\leq 625(25n\omega_n)^{\frac{p}{n-1}}N_M(r_0)^2\left(\frac{I(\Omega)}{I_0(\Omega)}\right)^{1+\frac{p}{n-1}}\frac{\max\{k_0^{\frac{p}{n-1}},k^{\frac{p}{n-1}}\}}{|\Sigma|^{\frac{p}{n-1}}}.
\end{equation}
Since $k<k_0$, then $k\leq k_0-1$ and
\begin{equation}\label{proof1}
\max\{k_0^{\frac{p}{n-1}},k^{\frac{p}{n-1}}\}\leq (k_0-1)^{\frac{p}{n-1}}+k^{\frac{p}{n-1}}.
\end{equation}
Moreover
\begin{equation}\label{proof2}
k_0-1\leq\frac{1}{25n\omega_n}\frac{I_0(\Omega)}{r_0^{n-1}}|\Omega|^{\frac{n-1}{n}}=\frac{1}{25n\omega_n}\frac{I_0(\Omega)}{r_0^{n-1}}\frac{|\Sigma|}{I(\Omega)}.
\end{equation}
Inequality \eqref{ineq_thm_hyp} for $k<k_0$ follows by combining \eqref{proof0}, \eqref{proof1} and \eqref{proof2}. This concludes the proof.
\end{proof}

As a corollary of Theorem \ref{thm_hyp} we have the following result on hypersurfaces of the Euclidean space.

\begin{cor}\label{cor_hyp_eucl}
For any bounded domain $\Omega\subset\mathbb R^n$ with smooth boundary $\Sigma$ we have
\begin{equation}\label{final_eucl}
\mu_{k,p}(\Sigma)\leq C_{p,n} I(\Omega)^{1+\frac{p}{n-1}}\left(\frac{k}{|\Sigma|}\right)^{\frac{p}{n-1}},
\end{equation}
for all $k\in\mathbb N$, where $C_{p,n}$ depends only on $p$ and $n$.
\end{cor}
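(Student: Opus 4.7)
The plan is to specialize Theorem \ref{thm_hyp} to the case $M=\mathbb R^n$ with the Euclidean metric and exploit the fact that the ambient geometry imposes no restriction on the scale parameter $r_0$.

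First I would record three elementary facts about Euclidean space. Since balls in $\mathbb R^n$ satisfy $|B(x,s)|=\omega_n s^n$ and $|\partial B(x,s)|=n\omega_n s^{n-1}$ with equality for every $s>0$, the defining conditions of $r(x)$ are trivially met at every point, so $r(x)=+\infty$ and consequently $r_-(\Omega)=+\infty$ for any bounded smooth $\Omega$. A standard packing argument in $\mathbb R^n$ (the same one cited in the paragraph preceding Proposition \ref{prop_hyp}) gives $N_{\mathbb R^n}(r_0)\leq 40^n$ independently of $r_0>0$. Finally, the sharp isoperimetric inequality in $\mathbb R^n$ applied to every open subset $U\subset\Omega$ yields $I_0(\Omega)=n\omega_n^{1/n}$, a constant depending only on $n$.

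Next I would plug these facts into \eqref{ineq_thm_hyp}. Since $r_-(\Omega)/5=+\infty$, the inequality is valid for every $r_0>0$, so I can let $r_0\to+\infty$: the term $1/r_0^p$ then vanishes while $N_{\mathbb R^n}(r_0)^2\leq 40^{2n}$ stays uniformly bounded. This leaves
\[
\mu_{k,p}(\Sigma)\leq 625\cdot 40^{2n}\,\frac{I(\Omega)}{I_0(\Omega)}\left(25n\omega_n\,\frac{I(\Omega)}{I_0(\Omega)}\,\frac{k}{|\Sigma|}\right)^{p/(n-1)}.
\]
Substituting $I_0(\Omega)=n\omega_n^{1/n}$ and absorbing every factor that depends only on $p$ and $n$ into a single constant $C_{p,n}$ produces \eqref{final_eucl}.

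The argument is essentially routine once Theorem \ref{thm_hyp} is in hand; no genuine obstacle appears. The only point requiring a moment's care is the passage $r_0\to+\infty$, which is not a delicate limit but simply reflects the fact that \eqref{ineq_thm_hyp} holds uniformly for every finite $r_0$, so one may take the infimum over $r_0$; alternatively, one fixes $r_0$ large enough that the first term in the bracket is dominated by the second, for instance $r_0$ of order $(|\Sigma|/k)^{1/(n-1)}$. In either case the Euclidean flatness eliminates the additive curvature-type term that is present in the general statement.
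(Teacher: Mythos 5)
Your proof is correct and follows essentially the same route as the paper's: specialize Theorem \ref{thm_hyp} to the Euclidean setting, note $I_0(\Omega)=n\omega_n^{1/n}$, $N_{\mathbb R^n}(r)\leq 40^n$, and $r_-(\Omega)=+\infty$, then let $r_0\to+\infty$ to kill the $1/r_0^p$ term. The paper's proof is a terser version of exactly this argument.
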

\begin{proof}
The result follows immediately from Theorem \ref{thm_hyp}. In fact, in the case of $\mathbb R^n$ we easily check that $I_0(\Omega)=n\omega_n^{\frac{1}{n}}$ and $N_M(r)\leq 40^n$ for all $r>0$. Moreover, we can let $r_0\rightarrow+\infty$, being $r_-(\Omega)=+\infty$.
\end{proof}

Theorem \ref{thm_hyp} also implies the following corollary for hypersurfaces bounding some domain $\Omega$ in a Riemannian manifold $(M,g)$, when we assume a lower bound on the Ricci curvature of $(M,g)$.

\begin{cor}\label{cor_hyp}
Let $(M,g)$ be a complete $n$-dimensional Riemannian manifold with ${\rm Ric}_g\geq -(n-1)\kappa^2$, $\kappa\geq 0$. For any bounded domain $\Omega$ with smooth boundary $\Sigma$ we have
\begin{equation}\label{final}
\mu_{k,p}(\Sigma)\leq A_{p,n} \frac{I(\Omega)}{I_0(\Omega)}\kappa^p+B_{p,n}\left(\frac{I(\Omega)}{I_0(\Omega)}\right)^{1+\frac{p}{n-1}}\left(\frac{k}{|\Sigma|}\right)^{\frac{p}{n-1}},
\end{equation}
for all $k\in\mathbb N$, where $A_{p,n},B_{p,n}$ depend only on $p$ and $n$.
\end{cor}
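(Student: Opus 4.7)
The strategy is to apply Theorem \ref{thm_hyp} with a choice of $r_0$ that depends on $\kappa$, and use the Ricci lower bound to control both geometric quantities $r_-(\Omega)$ and $N_M(r_0)$ by constants depending only on $n$, so that \eqref{ineq_thm_hyp} collapses into the desired two-term bound \eqref{final}.

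First, I would invoke Bishop--Gromov: under ${\rm Ric}_g \geq -(n-1)\kappa^2$, one has $|B(x,s)|_g \leq |B(s)|_\kappa$ uniformly in $x \in M$, where $|B(s)|_\kappa$ is the volume of a ball of radius $s$ in the model space of constant curvature $-\kappa^2$. Since $|B(s)|_\kappa/(\omega_n s^n) \to 1$ as $s \to 0^+$, there exists a universal constant $c_n > 0$ such that whenever $\kappa s \leq c_n$ both $|B(x,s)|_g \leq 2\omega_n s^n$ and $|\partial B(x,s)|_g \leq 2n\omega_n s^{n-1}$ hold (the latter via the coarea formula together with an analogous comparison for spheres). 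Consequently $r_-(\Omega) \geq c_n/\kappa$. A standard packing argument based on Bishop--Gromov (in the spirit of \cite[Lemma 3.1]{colboisgirouard}) also yields an integer $N_n$, depending only on $n$, such that $N_M(r) \leq N_n$ for every $r \leq c_n/\kappa$.

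With these universal bounds in hand, for $\kappa > 0$ I would set $r_0 := c_n/(10\kappa)$, ensuring that $r_0 < r_-(\Omega)/5$ and $N_M(r_0) \leq N_n$. Substituting into \eqref{ineq_thm_hyp} gives
\[
\mu_{k,p}(\Sigma) \leq 625\, N_n^2 \,\frac{I(\Omega)}{I_0(\Omega)}\left[\left(\frac{10\kappa}{c_n}\right)^{p} + \left(25 n \omega_n \frac{I(\Omega)}{I_0(\Omega)}\frac{k}{|\Sigma|}\right)^{p/(n-1)}\right],
\]
which is exactly \eqref{final} upon setting $A_{p,n} := 625\, N_n^2 (10/c_n)^p$ and $B_{p,n} := 625\, N_n^2 (25 n \omega_n)^{p/(n-1)}$. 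The case $\kappa = 0$ is easier: then $r_-(\Omega) = +\infty$ and $N_M(r)$ is uniformly bounded at every scale, so one may let $r_0 \to \infty$ in \eqref{ineq_thm_hyp}, killing the first bracket term and yielding \eqref{final} with vanishing $\kappa^p$-contribution.

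The main technical content of the argument is thus concentrated in the two Bishop--Gromov-based estimates $r_-(\Omega) \geq c_n/\kappa$ and $N_M(r) \leq N_n$ for $r \leq c_n/\kappa$; both are standard but require careful comparison with the Euclidean model at small scales, and it is crucial that the restriction $s \lesssim 1/\kappa$ keeps the volume of balls within a bounded factor of $\omega_n s^n$. Once these are in place, the conclusion follows by the straightforward algebraic rearrangement above.
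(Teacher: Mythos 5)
Your proof is correct and takes essentially the same approach as the paper: apply Theorem \ref{thm_hyp} with $r_0 \sim 1/\kappa$, using Bishop--Gromov to bound $r_-(\Omega)$ from below and $N_M(r_0)$ from above by dimensional constants. The only cosmetic difference is that the paper first rescales $g \mapsto \kappa^2 g$ to normalize $\kappa = 1$ (exploiting scale invariance of the isoperimetric ratio and the scaling law $\mu_{k,p}(\Sigma,\kappa^2 g) = \kappa^{-p}\mu_{k,p}(\Sigma,g)$) and then chooses $r_0$ equal to a constant $r(n)$, whereas you work directly with general $\kappa$ and absorb the $\kappa$-dependence into the choice $r_0 = c_n/(10\kappa)$; the two bookkeepings are algebraically equivalent.
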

\begin{proof}
In the case that $\kappa=0$, the proof is essentially the same as in the Euclidean case (see Corollary \ref{cor_hyp_eucl}). In fact from Bishop-Gromov Theorem we deduce that $N_M(r)\leq 40^n$ for all $r>0$, and we can let $r_0\rightarrow +\infty$ being $r_-(\Omega)=+\infty$. As for $\kappa\ne 0$, it is sufficient to prove \eqref{final} for $\kappa=1$, noting that the result for any $\kappa\ne 0$ follows by replacing the metric $g$ on $M$ by $\kappa^2g$, and considering on $\Sigma$ the metric induced by $\kappa^2g$. Then ${\rm Ric}_{\kappa^2g}\geq-(n-1)$, $\mu_{k,p}(\Sigma,\kappa^2g)=\frac{1}{\kappa^p}\mu_{k,p}(\Sigma)$ and $|(\Sigma,\kappa^2g)|=\kappa^{n-1}|\Sigma|$, while the isoperimetric ratio is invariant under scaling.

When $\kappa=1$, let $r(n)>0$ denote the largest $r>0$ such that $(\sinh(r))^{n-1}\leq 2r^{n-1}$ for all $r<r(n)$. The constant $r(n)$ depends only on $n$. Since, by volume comparison, the volume of a ball of radius $r$ in $(M,g)$ is bounded by $n\omega_n\int_0^r(\sinh(s))^{n-1}ds$ (the volume in the standard hyperbolic space) and the $n-1$-dimensional volume of the sphere of radius $r$ in $(M,g)$ is bounded by $n\omega_n(\sinh(r))^{n-1}$ (the corresponding volume in the standard hyperbolic space), we deduce that $r_-(\Omega)\geq r(n)>0$. Moreover, still Bishop-Gromov volume comparison Theorem  allows  to conclude that there exists a constant $V(n)$ depending only on $n$ such that $N_M(r)\leq V(n)$ for all $r\leq r(n)$. Applying Theorem \ref{thm_hyp} we conclude that 
$$
\mu_{k,p}(\Sigma)\leq A_{p,n} \frac{I(\Omega)}{I_0(\Omega)}+B_{p,n}\left(\frac{I(\Omega)}{I_0(\Omega)}\right)^{1+\frac{p}{n-1}}\left(\frac{k}{|\Sigma|}\right)^{\frac{p}{n-1}},
$$
for all $k\in\mathbb N$, for some constants $A_{p,n}$, $B_{p,n}$ depending only on $p$ and $n$.
This concludes the proof.
\end{proof}

\begin{rem}
We remark that in this case the isoperimetric ratio $I(\Omega)$ in \eqref{final_eucl} and \eqref{final} cannot be decoupled, in principle, from $k$. This has been observed for $p=2$ in \cite{luc}.
\end{rem}

\section*{Acknowledgements}
The second author acknowledges hospitality of the Institut de Math\'emathiques of the University of Neuch\^atel offered during the development of part the work. The second author is member of the Gruppo Nazionale per l'Analisi Matematica, la Probabilit\`a e le loro Applicazioni (GNAMPA) of the Istituto Nazionale di Alta Matematica (INdAM).


\bibliography{bibliography}{}
\bibliographystyle{abbrv}


\end{document}